\documentclass[12pt]{amsart}
\usepackage{xcolor}
\usepackage[draft]{todonotes}
\usepackage[english]{babel}
\usepackage{geometry}
\geometry{textwidth=16cm}
\usepackage[T1]{fontenc}
\usepackage[latin1]{inputenc}
\usepackage{amsmath,amssymb,mathrsfs,amsthm, tikz-cd,mathrsfs}
\usepackage{soul}
\usepackage{epsfig}
\usepackage{hyperref}
\usepackage{float}
\usepackage{graphicx}
\usepackage{enumitem}
\usepackage{datetime}
\usepackage{xcolor}
\usepackage{fancyhdr}




\newtheorem{Theorem}{Theorem}[section]
\newtheorem{Proposition}[Theorem]{Proposition}

\newtheorem{lemma}[Theorem]{Lemma}

\newcommand{\fa}{\mathfrak{a}}

\newcommand{\fg}{\mathfrak{g}}
\newcommand{\fh}{\mathfrak{h}}

\newcommand {\ev} {{\bar0}}
\newcommand {\od} {{\bar1}}


\begin{document}

\title[Lagrangian extensions and  left-symmetric structures]{Lagrangian extensions and left-symmetric structures on the four-dimensional real Lie superalgebras}

\author{Sofiane Bouarroudj}
\address {Division of Science and Mathematics, New York University Abu Dhabi, P.O. Box 129188, Abu Dhabi, United Arab Emirates.}
\email{sb3922@nyu.edu; amr1190@nyu.edu}

\author{Ana-Maria Radu}


\keywords {Quasi-Frobenius Lie (super)algebra, Lagrangian extensions, Left-symmetric structures, Novikov structures, Balinsky-Novikov structures.
}
 \subjclass[2020]{17B05; 17B55; 17D25}

\begin{abstract} 
Over real numbers, Backhouse classified all four-dimensional Lie superalgebras. From this list, we  will investigate those Lie superalgebras that can be obtained as Lagrangian extensions. Moreover, we investigate left-symmetric structures on these Lie superalgebras. Furthermore, except for two of them, we show that they are all Novikov superalgebras. 

\end{abstract} 

\maketitle

\hspace{10pt}

\section{Introduction}

\subsection{Lagrangian extensions of Lie (super)algebras} The notion of {\it $T^*$-extension} of Lie algebras was initially introduced by Bordemann \cite{Bor}. A short explanation is as follows. Given a Lie algebra $\fh$ equipped with an $\fh^*$-valued bilinear map $\alpha$, a $T^*$-extension of $\fh$ is the Lie  algebra structure on the vector space  $\fg:=\fh\oplus \fh^*$ constructed by means of the form $\alpha$ that satisfies a certain cohomological requirement.  The Lie algebra $\fg$ naturally admits a non-degenerate invariant {\it symmetric} bilinear form.  Conversely, Bordemann showed in \cite{Bor} that every finite-dimensional nilpotent Lie algebra $\fh$ with an invariant symmetric non-degenerate bilinear form   ``is'' a suitable $T^*$-extension.

In particular,  if $\fh$ is a real finite-dimensional Lie algebra belonging to a Lie group $G$,  then the $T^*$-extension of $\fh$ is the Lie algebra of the cotangent bundle $T^*G$ of $G$,  considered as a Lie group.  The terminology and notation are justified by this differential geometric fact.

A superization of this construction was given in \cite{BBB}. In particular, it was shown that every solvable Lie superalgebra with an {\it even}  non-degenerate  \emph{symmetric} bilinear form is either isomorphic to a $T^*$-extension of a certain Lie superalgebra or to an ideal of codimension one of a certain $T^*$-extension.  

The notion of $T^*$-extension was also studied by Baues and Cort\'es in \cite{BC} in the context of Lie algebras admitting a~flat connection,  and called {\it Lagrangian extensions} since the Lie sub-algebra $\fh^*$ is a Lagrangian ideal in $\fh\oplus \fh^*$. Unlike the case studied in \cite{Bor}, the non-degenerate form in this context is \emph{anti-symmetric} and defines a 2-cocycle on $\fh$. In addition, the Lie algebra $\fh$ must admit a flat connection so that the Lagrangian extension is possible. Lie algebras with a non-degenerate anti-symmetric form satisfying the 2-cocycle conditions are called quasi-Frobenius, and are studied intensively in the literature, see, e.g., \cite{E, O}. 

Lie algebras possessing both a  non-degenerate invariant  symmetric bilinear form and a non-degenerate 2-cocycle of its scalar cohomology, were studied in \cite{BBM} and were called { \it quadratic symplectic} Lie algebras. If the ground field is algebraically closed, the authors showed that every quadratic symplectic Lie algebra is a $T^*$-extension of a Lie algebra which admits an invertible derivation. Necessary and sufficient conditions on $\fh$ and on the cocycle used in the construction of the $T^*$-extension are investigated to ensure that the extended algebra admits a 
skew-symmetric derivation and, hence, a symplectic structure. As a result, complex symplectic quadratic Lie algebras with dimensions less than or equal to 8 are classified. Some of these Lie algebras correspond to some Lie groups that admit a bi-invariant pseudo-Riemannian metric as well as a left-invariant symplectic form. These Lie groups are nilpotent, and their geometry is very rich, see \cite{BBM, MR} for more details.

The notion of Lagrangian extension was superized  in \cite{BM} (see also \cite{BBE} where the ground field is of characteristic 2). The construction is valid for arbitrary Lie superalgebras defined over a field of arbitrary characteristic  provided the Lie superalgebra admit a torsion-free flat connection. Moreover, they have observed that there are two ways to perform this extension: they  consider either $\fg:=\fh\oplus \fh^*$ or $\fg:=\fh\oplus \Pi (\fh^*)$,  where $\Pi$ is the change of parity functor.  They call them $T^*$-extension and $\Pi T^*$-extensions of $\fh$,  respectively. In the first case, the non-degenerate form is even, and in the second case, it is odd.  

Several examples of Lagrangian extensions are given in \cite{BM}, including filiform Lie superalgebras and some Lie superalgebras over reals. As a source of examples, they used the list of 4-dimensional real Lie superalgebras classified by Backhouse in \cite{B}.

One of the main objectives of this paper is to complete the list of examples among the 4-dimensional Lie superalgebras and explain how they can be obtained as Lagrangian extensions, see Section \ref{Tstar2}. According to the Backhouse classification, this investigation will classify all four-dimensional real Lie superalgebras that admit (or do not admit) Lagrangian extensions. 

\subsection{Left-symmetric structures}

Left-symmetric algebras (also known in the literature as Vinberg-Koszul algebras) play an important role in various fields of mathematics and mathematical physics; for instance, rooted tree algebras, operad theory, vertex algebras, convex homogeneous cones, affine manifolds, left-invariant affine structures on Lie groups, to name just a few. For a thorough review, see  \cite{B3}. 

 Over the real and complex numbers
left-symmetric algebras are closely related to the theory of left-invariant affine structures over Lie groups; see, for 
example, \cite{M, GS} (and references therein).

The investigation and classification of left-symmetric structure on Lie (super)algebras is an important subject on its own, as initiated in \cite{S}. The problem, however, seems to be difficult to handle.

For reductive Lie algebras, Baues \cite{Ba}  proved that $\mathfrak{gl}(n)$ is the only reductive Lie algebra with one-dimensional center and a simple semisimple ideal which admits left- symmetric algebras over an algebraically closed ground field (see also \cite{B1}). Moreover, all left-symmetric algebras on $\mathfrak{gl}(n)$ were classified. 

Simple Lie algebras over complex numbers do not admit left-symmetric structures, see~\cite{H}. The existence of such structures is not always guaranteed, even for nilpotent Lie algebras. There are filiform nilpotent Lie groups of dimension $10 \leq  n \leq 13$ that do not admit any left-invariant affine structure, see \cite{B, B3, BG}. Any filiform nilpotent Lie group of dimension $n \leq 9$ admits a left-invariant affine structure. 

Nevertheless, left-symmetric algebra structures on both finite-dimensional and infinite-dimensional Lie algebras have been studied extensively, see \cite{Bai, B2, YZ} as well as \cite{CL, KCB, TB}, respectively. Left-symmetric structures on the Virasoro superalgebra were studied in \cite{KB}.

In the super setting, it was proved in \cite{DZ} that  $\mathfrak{sl}(m +1|m)$ admits a left-symmetric superalgebra, and a full classification is offered in the case of  $\mathfrak{sl}(2|1)$. Moreover, it was shown that $\mathfrak{sl}(m|1)$ does not admit a left-symmetric structure for $m \geq 3$. 

Over a field of characteristic 2, all left-symmetric structures on a 2-dimensional superspace have been classified in \cite{BBE}.

There is also the notion of Novikov algebra. It was introduced in connection with the Poisson brackets of hydrodynamic type, see \cite{BN}.

A second aim of this paper is to provide explicitly a left-symmetric structure on the 4-dimensional real Lie superalgebras. Surprisingly  enough, all these Lie superalgebras admit such a structure, given explicitly in Section \ref{leftsym}. Furthermore, we show that these structures are Novikov, except for the two Lie superalgebras $(D_0^{10})^1$ and $(D_0^{10})^2$. In addition, all these Lie superalgebras admit a Balinsky-Novikov structure.

Classifying all left-symmetric structures on each of these Lie superalgebras remains an open problem.

\section{Main concepts and definitions}

The purpose of this section is to review the main definitions and theorems that will be used in this paper. 

Throughout the text,  $\mathbb K$ is a field of characteristic $p\not =2$. 

Let $V=V_{\bar 0}\oplus V_{\bar 1}$ be a superspace defined over  ${\mathbb K}$. The parity of a homogeneous element $v\in V_{\bar{i}}$ is denoted by $|v|:=\bar{i}$. The element $v$ is called \textit{even} if $v\in V_{\bar 0}$ and \textit{odd} if $v\in V_{\bar 1}$. The {\it superdimension} of $V$ is $\mathrm{sdim}(V)=a+b\epsilon$, where $\epsilon^2=1$, and $a=\mathrm{dim}(V_{\bar 0})$, $b=\mathrm{dim}(V_{\bar 1})$. Usually, $\mathrm{sdim}(V)$ is shorthanded as $a\mid b$. 

We will denote by $\Pi$ the functor that assigns to $V$ the superspace $\Pi(V)$ defined as the other copy of $V$ with the opposite parity of its homogeneous components. We have 
\[
(\Pi(V))_{\bar 0} = V_{\bar 1}, \quad  (\Pi(V))_{\bar 1} = V_{\bar 0}.
\]
The space $\Pi(V)$ consists of the linear combinations of elements that we denote by $\Pi(f)$ for every homogeneous $f\in V$.  For every superspace $V$,  denote the canonical odd homomorphism induced by the functor $\Pi$ by 
\[
\Pi: V \rightarrow \Pi(V) \quad f \mapsto \Pi(f).
\] 
We will identify $V$ and $\Pi(\Pi(V))$, naturally.

Let us introduce the notion of bilinear forms over a superspace. For more details, see \cite{LSoS}. 

Let $V$ and $W$ be two superspaces defined over an arbitrary field ${\mathbb K}$. Let $\mathcal{B} \in \text{Bil}(V,W)$ be a {\it homogeneous} bilinear form. The Gram matrix $B=(B_{ij})$ associated to ${\mathcal B}$ is given by the formula 
\begin{equation*}\label{martBil}
B_{ij}=(-1)^{|{\mathcal B}||v_i|}{\mathcal B} (v_{i}, w_{j})\text{~~for the basis vectors $v_{i}\in V$ and $w_{j}\in W$.}
\end{equation*}
This definition can be extended by linearity to non-homogeneous forms.  Moreover,  it allows us to identify a~bilinear form $B(V, W)$ with an element of $\mathrm{Hom}(V, W^*)$. 
Consider the \textit{upsetting} of bilinear forms
$u\colon\mathrm{Bil} (V, W)\rightarrow \mathrm{Bil}(W, V)$ given by the formula \begin{equation*}
\label{susyB}
u(\mathcal{B})(u, v)=(-1)^{|v||u|}\mathcal{B} (v,u)\text{~~for any homogeneous $v \in V$ and $u\in W$.}
\end{equation*}
From now on, let us assume that $V=W$. 

In terms of the Gram matrix $B$ of ${\mathcal B} $: the form
${\mathcal B}$ is  \textit{symmetric} if  and only if 
\begin{equation*}\label{BilSy}
u(B)=B,\;\text{ where $u(B)=
\left( \begin{array}{cc} R^{t} & (-1)^{|\omega |}T^{t}\\[2mm]
(-1)^{|\omega |}S^{t} & -U^{t}\end{array}\right ),$ for $B= \left (\begin{array}{cc} R & S\\[2mm]
T & U
\end{array} \right)$.}
\end{equation*}
Similarly, \textit{anti-symmetry} of ${\mathcal B}$ means that $u(B)=-B$.

Throughout the text, we denote by $\omega$ an anti-symmetric bilinear form. 

An even (resp. odd) non-degenerate anti-symmetric  bilinear form is called {\it orthosymplectic} (resp. {\it periplectic}).  

\begin{Proposition}[Superdimension constraints]
If $V=V_{\bar{0}} \oplus V_{\bar{1}}$ is a finite-dimensional superspace such that $V_{\bar{1}} \not = {0}$, equipped with a non-degenerate anti-symmetric bilinear form $\omega$, then:

\begin{enumerate}[label=(\roman*)]

    \item[\textup{(}i\textup{)}] If $|\omega|=\bar{0}$, then $\mathrm{dim} (V_{\bar{0}})$ is even.\\
    \item[\textup{(}ii\textup{)}] If $|\omega|=\bar{1}$, then $\mathrm{dim}(V_{\bar{0}})=\mathrm{dim} (V_{\bar{1}})$.
    
\end{enumerate}

\end{Proposition}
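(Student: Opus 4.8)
The plan is to translate both hypotheses entirely into statements about the Gram matrix $B$ of $\omega$ defined in \eqref{martBil}, and then to read off the numerical constraints from the block structure of $B$ together with the anti-symmetry relation \eqref{BilSy}. I would write $B = \left(\begin{array}{cc} R & S \\ T & U \end{array}\right)$ in the block form adapted to the decomposition $V = V_{\bar{0}} \oplus V_{\bar{1}}$, so that $R$ is the $\dim(V_{\bar{0}}) \times \dim(V_{\bar{0}})$ even-even block, $U$ is the $\dim(V_{\bar{1}}) \times \dim(V_{\bar{1}})$ odd-odd block, and $S,T$ are the rectangular off-diagonal blocks.

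First I would record how the parity of $\omega$ forces entries to vanish: since $\omega(v_i,v_j)$ can be nonzero only when $|v_i|+|v_j|=p(\omega)$, an even form has $S=T=0$ while an odd form has $R=U=0$. The sign prefactor $(-1)^{|\omega||v_i|}$ in \eqref{martBil} affects only signs, not the vanishing pattern, so in case (i) the matrix $B$ is block-diagonal and in case (ii) it is block-anti-diagonal.

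Next I would feed each case into the anti-symmetry condition $u(B)=-B$ of \eqref{BilSy}. In case (i), with $|\omega|=\bar{0}$ so that $(-1)^{|\omega|}=1$, comparing blocks yields $R^{t}=-R$ and $U^{t}=U$; thus the even-even block $R$ is skew-symmetric. Non-degeneracy of $\omega$ means $B$, hence $R$, is invertible, and the key classical input is that over a field of characteristic $\neq 2$ an invertible skew-symmetric matrix must have even size, since its determinant vanishes in odd size. This gives that $\dim(V_{\bar{0}})$ is even. In case (ii), with $|\omega|=\bar{1}$ so that $(-1)^{|\omega|}=-1$, the relation $u(B)=-B$ reduces to $S=T^{t}$, and invertibility of the block-anti-diagonal $B$ forces both off-diagonal blocks to be square and invertible; a rectangular block $S$ of size $\dim(V_{\bar{0}}) \times \dim(V_{\bar{1}})$ can be invertible only if $\dim(V_{\bar{0}})=\dim(V_{\bar{1}})$, which is the claim.

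I expect the only delicate point to be the careful bookkeeping of the sign conventions in the upsetting map $u$ and in the Gram matrix, since a misplaced $(-1)^{|\omega|}$ would interchange the symmetry types of the blocks and break the argument; the structural and dimension-counting steps themselves are routine once the block relations are established. The hypothesis $V_{\bar{1}}\neq 0$ serves mainly to keep both parts genuinely in the super setting — in case (ii) it guarantees that the odd sector with which the even sector must pair is actually present.
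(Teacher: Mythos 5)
Your proof is correct, and it is the standard argument one expects here: the paper itself does not spell out a proof (it simply cites [BM]), but your reduction to the Gram matrix, the vanishing pattern of the blocks forced by the homogeneity of $\omega$, the skew-symmetry of the even-even block $R$ in case (i) (whence $\det R = (-1)^{\dim V_{\bar 0}}\det R$ forces even size over a field of characteristic $\ne 2$), and the full-rank condition on the rectangular block $S=T^{t}$ in case (ii) are exactly the intended route. The sign bookkeeping in $u(B)=-B$ is handled correctly in both parities, so there is nothing to fix.
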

\begin{proof} See, \cite{BM} \end{proof}
A Lie superalgebra is a superspace $\mathfrak{g} =  \mathfrak{g}_\ev \oplus \mathfrak{g}_\od$ over a field of characteristic $p \not = 2$ together with a bilinear binary operator $[\cdot, \cdot] : \mathfrak{g} \times \mathfrak{g} \rightarrow \mathfrak{g}$, called the Lie bracket that has the following properties: (for all homogeneous $x, y, z \in \mathfrak{g}$)

\begin{enumerate}
    \item[(i)] Super anti-commutativity: $[x, y] = -(-1)^{|x||y|}[y, x]$;
    \item[(ii)] Super Jacobi identity: $[x, [y, z]] = [[x, y], z] + (-1)^{|x||y|}[y, [x, z]]$ where $|x|={\bar 0}$ if $x \in \mathfrak{g}_{\bar 0}$ and $|x|={\bar 1}$ if $x \in \mathfrak{g}_{\bar 1}$;
    \item [(iii)] If $p=3$, one must add $[x,[x,x]]=0$ if $x\in \fg_{\bar 1}$.

\end{enumerate}
There is also the notion of Lie superalgebra in characteristic $2$ but we are not studying them here.

Let $\omega \in \mathrm{Bil}(\mathfrak{g}, \mathfrak{g})$ be an anti-symmetric bilinear form on a Lie superalgebra $\mathfrak g$. The form $\omega$ is called \emph{closed} if it satisfies
\[(-1)^{|x||z|}\omega(x, [y,z])+(-1)^{|z||y|}\omega(z,[x,y])+(-1)^{|y||x|}\omega(y,[z,x])=0,\]
for all homogeneous $x,y,z\in \fg_{\bar i}$. This means that $\omega$ is 2-cocycle with scalar values. 

A Lie superalgebra $\fg$ is called \textit{quasi-Frobenius} if it is equipped with a closed non-degenerate anti-symmetric bilinear form $\omega$. If $\omega$ is even (resp. odd) we call it {\it orthosymplectic} (resp. {\it periplectic}). We denote such a Lie superalgebra by $(\fg, \omega)$. Recently, these Lie superalgebras were studied in \cite{BBE, BE, BEM, BM} in the context of Lagrangian and double extensions.  

A Lie superalgebra  is called \textit{Frobenius} if $\omega$ is exact, which means that there is $f\in \mathfrak{g}^*$ such that $\omega(x,y)=f([x,y])$ for all $x,y \in \mathfrak{g}.$

Let $(\fg,\omega)$ be a quasi-Frobenius Lie superalgebra. An ideal $I\subseteq \fg$ is called {\it Lagrangian} if and only if $I^\perp=I$. 

\subsection{Connections on Lie (super)algebras}
To write down a Lagrangian extension of a Lie (super)algebra, we will need the notion of a connection on a Lie (super)algebra. In our context, connections are defined as non-associative products on a superalgebra with purely algebraic definitions. See \cite{BM,MKSV} for the non-super case. 

An \emph{even} bilinear map $\nabla: \mathfrak{g} \times \mathfrak{g} \rightarrow \mathfrak{g}$, written as $(x,y) \rightarrow \nabla_x y $ is called a \emph{connection} on $ \mathfrak{g}.$

We will introduce the notion of  \emph{torsion} and \emph{curvature} of a connection. 

Let $\nabla : \mathfrak{g} \times \mathfrak{g} \rightarrow \mathfrak{g}$ be a connection on $\mathfrak g$. For any homogeneous  $x,y,z \in \mathfrak{g}_{\bar i}$, the  \textit{torsion} $T^{\nabla}$ of $\nabla$ is defined as 
\begin{equation}\label{TorCur} 
T(x,y) := \nabla_xy -(-1)^{|x||y|}\nabla_y x - [x,y].
\end{equation}
The \textit{curvature} $R^{\nabla}$ of $\nabla$ is defined as 
\begin{equation}\label{Cur} 
R^\nabla(x,y)h := \nabla _x \nabla_y z-(-1)^{|x||y|)}\nabla_y \nabla_x z - \nabla_{[x,y]}z.
\end{equation}
By bilinearity, the definition of  $T$ and $R^\nabla$ can be extended to the whole $\mathfrak g$. 

If $T=0$, then the connection $\nabla$ is called {\it torsion-free}. The curvature $R^\nabla$ vanishes if and only if the map $\sigma^\nabla:  \mathfrak{g} \rightarrow \mathrm{End}(\mathfrak{g}), \; x \mapsto \nabla_x$ is a representation of $\mathfrak{g}$ on itself. In this case, the connection $\nabla$ will be called {\it flat}. 

\subsection{Left-symmetric (super)algebras}
Let us review a few notions regarding left- symmetric structures. 

A superalgebra $L =L_{\bar 0} \oplus L_{\bar 1}$ over a field $\mathbb K$ equipped with an even  bilinear operation $\cdot$ is called a left-symmetric superalgebra (or an LSSA for short) if the associator 
\[
(x, y, z) :=(x \cdot y) \cdot z- x  \cdot (y \cdot z)
\]
is supersymmetric in $x$ and $y$, i.e., $(x, y, z) =(-1)^{|x||y|}(y, x, z)$; or, equivalently,
\[
(x \cdot y) \cdot z - x \cdot (y \cdot z) = (-1)^{|x||y|}((y \cdot x) \cdot z-y \cdot (x \cdot z)), \text{ for all } x, y,z \in L_{\bar i}. 
\]
These superalgebras are Lie-admissible. The supercommutator $[x, y] :=x \cdot y-(-1)^{|x||y|} y \cdot x$ defines then a Lie superalgebra structure on $L$ and we denote it by ${\mathfrak g}(L)$. The resulting Lie superalgebra is called the associated Lie superalgebra of the LSSA. Note that if $L$ admits an LSSA then the even part $L_{\bar 0}$ also admits one. Associative superalgebras are all LSSAs. 

In \cite{H}, Helmstetter showed  that it is necessary to have $[L,L] \subsetneq L$ if $L$ is a left-symmetric algebra defined over the field of complex numbers. Over a field of characteristic $p$, this is not true anymore. Burde showed in \cite{B2} that classical Lie algebras over a field $p>3$ admit an LSA only in case $p$ divides $\mathrm{dim}(L)$.  In the super setting and over a field of prime characteristic, there has not been much research done so far. 

The existence of a flat connection $\nabla$ on a Lie superalgebra $\mathfrak g$ will induce a left-symmetric structure given by 
\[
x\cdot y:=\nabla_x(y) \text{ for all }x,y \in {\mathfrak g}.
\]
Additionally, every quasi-Frobenius Lie superalgebra $({\mathfrak g}, \omega) $ admits a left-symmetric structure given by
\[
\omega(x\cdot y, z)=(-1)^{|x||y|}\omega(y, [x,z]) \text{ for all }x,y \in {\mathfrak g}_{\bar i}, \text{ and } z \in {\mathfrak g}.
\]
The converse is not always true. Section~\ref{leftsym} provides several examples illustrating this. 

Certain subclasses of LSSAs are also of particular interest.  An LSSA with supercommutative right multiplication is called a {\it Novikov superalgebra} (cf. \cite{BN, Xu}). Explicitly, a {\it Novikov}  superalgebra $(L, \cdot)$ is an LSSA with an additional condition
\[
(z \cdot x) \cdot y =(-1)^{|x||y|}(z \cdot y) \cdot x,\;  \text{ for all } x,y \in L_{\bar i}, \text{ and } z \in  L.
\]
It turns out that there are two superizations of the notion of Novikov algebras. The second superization is due to \cite{Bal} (see also \cite{PB}) and will be referred to as {\it Balinsky Novikov superalgebras}, or {\it BN superalgebras} for short. They are defined as follows: 
\begin{enumerate}
    \item Left-symmetry and commutativity:
    \[
    (x \cdot y)\cdot z-x \cdot (y \cdot z)= (y \cdot x) \cdot z- y \cdot (x \cdot z), \qquad (x \cdot y) \cdot z= (x \cdot z)\cdot y,
    \]
for all homogeneous elements
\[
x,y,z \in L_{\bar 0}; \quad x,y\in L_{\bar 0}, z \in L_{\bar 1};  \quad x,z\in L_{\bar 0}, y \in L_{\bar 1};  \quad y,z\in L_{\bar 0}, x \in L_{\bar 1}; 
\]
    \item Commutativity: $x\cdot y=y \cdot x$ for all $x,y\in L_{\bar 1}$;

    \item Compatibility conditions:
    \[
    \begin{array}{ll}
    (x \cdot y)\cdot z= (x \cdot z)\cdot y & \text{ for all } x,y\in L_{\bar 1} \text{ and } z\in  L_{\bar 0};\\[2mm]
    x \cdot (y \cdot z)=(x \cdot y)\cdot z+ y \cdot (x \cdot z) & \text{ for all } x\in L_{\bar 0} \text{ and } y, z\in  L_{\bar 1};\\[2mm]
    x \cdot (y \cdot z)= (x \cdot y)\cdot z + (x \cdot z)\cdot y & \text{ for all } x,y, z\in L_{\bar 1}.\\[2mm]
    \end{array}
    \]
\end{enumerate}
Balinsky Novikov superalgebras are also admissible Lie superalgebras. The Lie structure is given as follows: 
\[
\begin{array}{ll}
[x,y]:=x \cdot y - y \cdot x, & \text{ for all } x,y\in L_{\bar 0};\\[2mm]

[x,y]:=x \cdot y - \frac{1}{2} y \cdot x=-[y,x], & \text{ for all } x\in L_{\bar 0}, \text{ and } y\in L_{\bar 1}\\[2mm]

[x,y]:=x \cdot y  & \text{ for all } x,y\in L_{\bar 1}.
\end{array}
\]
\subsection{The 4-dimensional real Lie superalgebras}
The ground field $\mathbb K$ is assumed to be the field of real numbers $\mathbb R$. The classification of four-dimensional real Lie superalgebras has been carried out by Backhouse, see \cite{B}. The Lie superalgebras with anti-symmetric bilinear forms were examined in \cite{BM}. 

%

As in the \cite{BM} paper, the tables in the Appendix represent the classification of 4-dimensional real Lie superalgebras as follows:
\begin{enumerate}
    \item \textbf{Table 1} having the {\it trivial algebras}, i.e.  $[\mathfrak{g_{\bar{1}}},\mathfrak{g_{\bar{1}}}]=\{0\}$ and with  $\mathrm{sdim} = 2|2$;
    \item \textbf{Table 2} having the {\it non-trivial algebras}, i.e. $[\mathfrak{g_{\bar{1}}},\mathfrak{g_{\bar{1}}}]\not = \{0\}$ and  with $\mathrm{sdim} = 2|2$;
    \item \textbf{Table 3} having the {\it trivial algebras}, i.e.  $[\mathfrak{g_{\bar{1}}},\mathfrak{g_{\bar{1}}}]=\{0\}$ and with $\mathrm{sdim} = 3|1$; 
    \item \textbf{Table 4} having the {\it trivial algebras}, i.e.  $[\mathfrak{g_{\bar{1}}},\mathfrak{g_{\bar{1}}}]= \{0\}$ and with $\mathrm{sdim} = 1|3$; 
    \item \textbf{Table 5} having the {\it non-trivial algebras}, i.e. $[\mathfrak{g_{\bar{1}}},\mathfrak{g_{\bar{1}}}] \not = \{0\}$ and  with $\mathrm{sdim} = 3|1$; 
    \item \textbf{Table 6} having the {\it non-trivial algebras}, i.e. $[\mathfrak{g_{\bar{1}}},\mathfrak{g_{\bar{1}}}]\not = \{0\}$ and with $\mathrm{sdim} = 1|3$.
\end{enumerate}
In this paper, we present only the forms as in Table 1-6 which are suitable for Lagrangian extensions, as opposed to \cite{BM}, where the forms are given in the form of parametric families. The classification of these structures up to a symplectomorphism is still an open problem. We also correct a claim in \cite{BM} stating that only non-homogeneous forms exist for the Lie superalgebras $(D_0^{10})^1$ and $(D_0^{10})^2$. It turns out that these two superalgebras both admit an even and an odd non-degenerate closed form, and both of them can be obtained as Lagrangian extensions. We only consider \emph{indecomposable} Lie superalgebras.

\section {The notion of \texorpdfstring{$T^*$-}{T*-} extensions and \texorpdfstring{$\Pi T^*$-}{Pi T*-} extensions} We will introduce the notion of $T^*$-extension and $\Pi T^*$-extension of Lie superalgebras (Lagrangian extensions). The notion of a $T^*$-extension for Lie algebras was originally introduced by Bordemann in \cite{Bor}.  Recently, it was studied in \cite{BC} in the context of quasi-Frobenius Lie algebras, and then in \cite{BBE, BM} in the context of quasi-Frobenius Lie superalgebras. 
\subsection{Flat connections and representations}
Let $({\mathfrak h}, \nabla)$ be a Lie superalgebra endowed with a flat connection $\nabla$. Since the connection $\nabla$ is flat, it follows that the map 
\[
\mathfrak{h} \rightarrow \mathrm{End}(\mathfrak{h}) \quad u\mapsto \nabla_u,
\]
defines a representation. We define a map $\rho: \mathfrak{h} \rightarrow \mathrm{End}(\mathfrak{h}^*)$, the dual representation, as follows: (for all homogeneous $u\in \mathfrak{h}$ and $\xi\in \mathfrak{h}^*$)
\begin{equation}\label{eq5.1}
\rho: \fh \rightarrow \mathrm{End}(\mathfrak{h}^*) \quad u \mapsto \rho(u), \text{ where } \rho(u)\cdot \xi =- (-1)^{|u| |\xi|} \xi \circ \nabla_u.
\end{equation} 
Similarly, we have a representation 
\begin{equation}\label{eq5.1b}
\chi: \fh \rightarrow \mathrm{End}(\Pi(\mathfrak{h}^*)) \quad u\mapsto \chi(u):= (-1)^{|u|}\, \Pi \circ \rho(u)\circ \Pi.
\end{equation} 

\begin{lemma}[Dual representation]
The maps \eqref{eq5.1} and \eqref{eq5.1b} are indeed representations. 

\end{lemma}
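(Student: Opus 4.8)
The plan is to verify the representation property directly from the definitions, treating the two claims separately but reducing the second to the first. Recall that a map $\sigma: \fh \to \mathrm{End}(M)$ is a representation of the Lie superalgebra $\fh$ on the supermodule $M$ precisely when $\sigma([u,v]) = \sigma(u)\sigma(v) - (-1)^{|u||v|}\sigma(v)\sigma(u)$ for all homogeneous $u,v \in \fh$. Since $\nabla$ is flat, we already know from the discussion preceding \eqref{eq5.1} that $u \mapsto \nabla_u$ satisfies this identity, i.e. $\nabla_{[u,v]} = \nabla_u \nabla_v - (-1)^{|u||v|}\nabla_v \nabla_u$; this is exactly the content of $R^\nabla = 0$ as recorded after \eqref{Cur}. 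So the whole proof rests on transporting this known identity through the dualization \eqref{eq5.1} and the parity shift \eqref{eq5.1b}.

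First I would establish that $\rho$ is a representation. Fix homogeneous $u,v \in \fh$ and $\xi \in \fh^*$, and compute both sides of the required identity applied to $\xi$. On one side, $\rho([u,v])\cdot \xi = -(-1)^{|[u,v]||\xi|}\,\xi \circ \nabla_{[u,v]}$, and substituting the flatness identity for $\nabla_{[u,v]}$ gives $-(-1)^{(|u|+|v|)|\xi|}\,\xi \circ \bigl(\nabla_u \nabla_v - (-1)^{|u||v|}\nabla_v \nabla_u\bigr)$. On the other side I would expand $\rho(u)\rho(v)\cdot\xi$ by applying \eqref{eq5.1} twice: the inner application $\rho(v)\cdot\xi = -(-1)^{|v||\xi|}\xi\circ\nabla_v$ produces a functional of parity $|v|+|\xi|$, so the outer application of $\rho(u)$ introduces the sign $-(-1)^{|u|(|v|+|\xi|)}$, yielding $(-1)^{|u|(|v|+|\xi|)+|v||\xi|}\,\xi\circ\nabla_v\circ\nabla_u$. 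Doing the same for $\rho(v)\rho(u)\cdot\xi$ and combining with the skew term $-(-1)^{|u||v|}$, the main task is a careful bookkeeping of the Koszul signs: collecting the exponents $|u||v| + |u||\xi| + |v||\xi|$ and checking that the two expressions agree term by term. This is purely a sign-chasing exercise, and I would present it as one aligned computation.

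Once $\rho$ is known to be a representation, the statement for $\chi$ in \eqref{eq5.1b} follows formally. The key observation is that conjugation by the odd isomorphism $\Pi$ is an (anti)homomorphism of endomorphism algebras in the graded sense: for homogeneous $A, B \in \mathrm{End}(\fh^*)$ one has $(\Pi \circ A \circ \Pi)(\Pi \circ B \circ \Pi) = (-1)^{|A|}\,\Pi \circ (AB) \circ \Pi$, since the inner $\Pi \circ \Pi$ acts as $(-1)^{|B|}\,\mathrm{Id}$ under the identification of $\Pi(\Pi(\fh^*))$ with $\fh^*$ and one must move $\Pi$ past $B$. I would plug $\chi(u) = (-1)^{|u|}\Pi\circ\rho(u)\circ\Pi$ into the bracket $\chi(u)\chi(v) - (-1)^{|u||v|}\chi(v)\chi(u)$, use this conjugation rule to pull out a single $\Pi\circ(-)\circ\Pi$, and match the result against $\chi([u,v]) = (-1)^{|u|+|v|}\Pi\circ\rho([u,v])\circ\Pi$ using the $\rho$-identity just proved. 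The prefactors $(-1)^{|u|}(-1)^{|v|}$ versus $(-1)^{|u|+|v|}$ match, and the remaining signs cancel against those produced by the conjugation rule.

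The main obstacle is not conceptual but the sign bookkeeping in the $\rho$ computation: the definition \eqref{eq5.1} carries a $\xi$-dependent sign $(-1)^{|u||\xi|}$, so when $\rho$ is applied twice the parity of the intermediate functional changes and the signs do not simply add. The delicate point I would watch is that in $\rho(u)\rho(v)\cdot\xi$ the outer sign depends on the parity $|v|+|\xi|$ of $\rho(v)\cdot\xi$, not merely on $|\xi|$; getting this dependence right is what makes the cross-terms $\xi\circ\nabla_v\circ\nabla_u$ and $\xi\circ\nabla_u\circ\nabla_v$ recombine correctly into the dualized flatness identity. I expect that once the intermediate parities are tracked honestly, everything collapses to the known $R^\nabla=0$ relation, so there is no genuine difficulty beyond this careful accounting.
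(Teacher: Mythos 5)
The paper itself gives no argument for this lemma (it simply cites [BM]), so a direct verification like yours is exactly what is called for, and your treatment of $\rho$ is correct: the computation
$\rho(u)\rho(v)\cdot\xi=(-1)^{|u||v|+|u||\xi|+|v||\xi|}\,\xi\circ\nabla_v\circ\nabla_u$ together with its $(u\leftrightarrow v)$ counterpart recombines, after factoring out $-(-1)^{(|u|+|v|)|\xi|}$, into $-(-1)^{(|u|+|v|)|\xi|}\,\xi\circ\bigl(\nabla_u\nabla_v-(-1)^{|u||v|}\nabla_v\nabla_u\bigr)=\rho([u,v])\cdot\xi$, using $R^\nabla=0$. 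That half is complete.

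The defect is in the auxiliary identity you use for $\chi$. Under the paper's convention (``we will identify $V$ and $\Pi(\Pi(V))$, naturally''), one has $\Pi\circ\Pi=\Id$, so conjugation introduces \emph{no} sign: $(\Pi\circ A\circ\Pi)(\Pi\circ B\circ\Pi)=\Pi\circ(AB)\circ\Pi$. Your rule with the extra factor $(-1)^{|A|}$ is not only unjustified (your verbal explanation even produces $(-1)^{|B|}$ rather than $(-1)^{|A|}$), it would make the verification fail: taking $|A|=|\rho(u)|=|u|$ literally gives
\[
\chi(u)\chi(v)-(-1)^{|u||v|}\chi(v)\chi(u)=(-1)^{|v|}\,\Pi\rho(u)\rho(v)\Pi-(-1)^{|u||v|+|u|}\,\Pi\rho(v)\rho(u)\Pi,
\]
which is not a multiple of $\Pi\rho([u,v])\Pi$ once $|u|\neq|v|$, so ``the remaining signs cancel'' does not hold as you assert. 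The correct statement is simpler: with $\Pi\circ\Pi=\Id$ the prefactors $(-1)^{|u|}(-1)^{|v|}=(-1)^{|u|+|v|}=(-1)^{|[u,v]|}$ are multiplicative over the bracket, and $\chi([u,v])=\chi(u)\chi(v)-(-1)^{|u||v|}\chi(v)\chi(u)$ follows immediately from the identity already proved for $\rho$. Replace your conjugation lemma by this observation and the proof is complete.
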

\begin{proof}
See \cite{BM}. 
\end{proof}
These two representations will be used to build Lagrangian extensions in the next subsection. 
\subsection{Polarization and Lagrangian extensions}\label{Lagrangian}
Following \cite{BC} (for the supercase, see \cite{BBE, BM}), a {\it polarization} for a~quasi-Frobenius Lie superalgebra $(\mathfrak{g}, \omega)$ is a choice of a homogeneous Lagrangian subalgebra $\mathfrak{l}$ of $(\fg, \omega)$ (namely, $\mathfrak{l}=\mathfrak{l}^{\perp}$, with $\mathfrak{l}^{\perp}$ the orthogonal with respect to $\omega$). A {\it strong polarization} of a~quasi-Frobenius Lie superalgebra $(\mathfrak{g}, \omega)$ is a pair $(\mathfrak{a}, N)$ consisting of a homogeneous Lagrangian {\it ideal} $\fa\subset \fg$ and a complementary Lagrangian {\it subspace}  $N\subset \fg$. Following \cite{BC}, the quadruple $(\fg, \omega,\fa, N)$ is referred to as a {\it strongly polarized}  quasi-Frobenius Lie superalgebra.

The following construction is borrowed from \cite{BM}, as a generalization to \cite{BC}.

By means of a 2-cocycle $\alpha \in Z^2(\mathfrak{h}, \mathfrak{h}^*)$ (resp. $\beta  \in Z^2(\mathfrak{h}, \Pi(\mathfrak{h}^*))$ we will construct a Lie superalgebra structure on $\mathfrak{g}:=\mathfrak{h}\oplus \mathfrak{h}^*$ (resp. $\mathfrak{g}:=\mathfrak{h}\oplus \Pi(\mathfrak{h}^*)$), called $T^*$-extension (resp. $\Pi T^*$-extension) (Lagrangian extension). By construction, we will see that these Lie superalgebras are strongly polarized quasi-Frobenius.

Recall that the two spaces $\mathfrak{h}^*$ and $\Pi(\mathfrak{h}^*)$ are $\mathfrak{h}$-modules by means of the representations  (\ref{eq5.1}) and (\ref{eq5.1b}).\\

\underline{On the space $\mathfrak{g}:=\mathfrak{h}\oplus \mathfrak{h}^*$:} The brackets are defined as follows: \textup{(}for all $u, v\in \mathfrak{h}$ and $\xi\in \mathfrak{h}^*$\textup{)}
\[
\begin{array}{l}
 [u,v]_\mathfrak{g}  :=  [u,v]_{\mathfrak{h}} +  \alpha(u,v), \quad [u,\xi]_{\mathfrak{g}} :=  \rho(u) \cdot \xi.
\end{array}
\]
An \emph{even} form $\omega$ is defined on $\mathfrak{g}$ as follows
\begin{equation}\label{omegaTstar}
\omega(u+\xi, v+ \zeta):= \xi(v) - (-1)^{|\zeta|| u|} \zeta(u),
\end{equation}
for all homogeneous $u+\xi, v+\zeta \in \mathfrak{h} \oplus \mathfrak{h}^*$.

This construction will be referred to as the {\it $T^*$-extension}.

\underline{On the space $\mathfrak{k}:=\mathfrak{h}\oplus \Pi(\mathfrak{h}^*)$:} The brackets are defined as follows: \textup{(}for any $u, v\in \mathfrak{a}$\textup{)}
\[
\begin{array}{l}
 [u,v]_\mathfrak{g}  :=  [u,v]_\mathfrak{h} +  \beta(u,v), \quad [u, \Pi(\xi)]_\mathfrak{k} :=  \chi (u) \cdot \Pi(\xi).
\end{array}
\]
We define an \emph{odd} form $\kappa$ on $\mathfrak{k}$ as follows
\begin{equation}\label{omegaTstarodd}
\kappa(u+\Pi(\xi), v+ \Pi(\zeta)):= \xi(v) - (-1)^{(|\zeta|+1) |u|} \zeta (u),
\end{equation}
for all homogeneous $u+\Pi(\xi), v+\Pi(\zeta) \in \mathfrak{h}\oplus \Pi(\mathfrak{h}^*)$.

This construction will be referred to as the {\it $\Pi T^*$-extension}.

\begin{lemma}[Conditions on $\alpha$ and $\beta$] \label{condalphabeta}\textup{(}i\textup{)} The form $\omega$ is closed on $\mathfrak{g}$ if and only if 
\begin{equation}\label{eq5.4}
(-1)^{|u||w|}\alpha(u,v)(w)+ \circlearrowleft (u,v,w)=0 \text{ for all homogeneous  $u,v,w\in\mathfrak{h}$},
\end{equation}
where here and in what follows, the symbol $\circlearrowleft(u,v,w)$ denotes the sum over cyclic permutations 
of the variables $u,v,w$.

\textup{(}ii\textup{)} The form $\kappa$ is closed on $\mathfrak{g}$ if and only if 
\begin{equation}\label{eq5.4b}
(-1)^{|u||w|}\Pi \circ \beta(u,v)(w)+ \circlearrowleft (u,v,w)=0 \text{ for all homogeneous $u,v,w\in\mathfrak{h}$}.
\end{equation}

\end{lemma}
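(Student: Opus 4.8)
The plan is to prove both parts by directly unwinding the closedness condition (the scalar 2-cocycle identity stated in the definition of \textbf{closed}) against the explicit formulas \eqref{omegaTstar} and \eqref{omegaTstarodd} for $\omega$ and $\kappa$, together with the bracket formulas of the $T^*$- and $\Pi T^*$-extensions. For part (i), I would take three homogeneous elements of $\mathfrak{g}=\mathfrak{h}\oplus\mathfrak{h}^*$ and substitute them into the closedness identity
\[
(-1)^{|X||Z|}\omega(X,[Y,Z])+(-1)^{|Z||Y|}\omega(Z,[X,Y])+(-1)^{|Y||X|}\omega(Y,[Z,X])=0.
\]
The key observation is that $\mathfrak{h}^*$ is an abelian ideal in $\mathfrak{g}$ (its bracket with itself vanishes and $\omega$ pairs $\mathfrak{h}$ with $\mathfrak{h}^*$), so most of the mixed cases reduce to identities that hold automatically or that encode the fact that $\rho$ is a representation. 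Thus the only case carrying genuine content is $X=u$, $Y=v$, $Z=w$ all in $\mathfrak{h}$: there the cross terms $\xi(v)$-type contributions from \eqref{omegaTstar} pick out exactly the $\alpha(u,v)(w)$ summands, and the cyclic sum collapses to \eqref{eq5.4}.

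The main work is a careful bookkeeping of the Koszul signs. I would first check the cases where at least one argument lies in $\mathfrak{h}^*$ and show each vanishes identically: when two arguments are in $\mathfrak{h}^*$, every bracket among them is zero and $\omega$ restricted to $\mathfrak{h}^*\times\mathfrak{h}^*$ is zero, so all three terms die; when exactly one argument, say $Z=\xi\in\mathfrak{h}^*$, is dual, the surviving terms combine the module action $\rho(u)\cdot\xi=-(-1)^{|u||\xi|}\xi\circ\nabla_u$ with the pairing in \eqref{omegaTstar}, and the resulting expression is precisely the compatibility of $\rho$ with the bracket, i.e. the cocycle-module identity already guaranteed by the Lemma on the dual representation (so this case imposes no new condition). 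Having cleared these, the all-even-in-$\mathfrak{h}$ case is what remains, and substituting $[u,v]_{\mathfrak{g}}=[u,v]_{\mathfrak{h}}+\alpha(u,v)$ into $\omega$ kills the $[u,v]_{\mathfrak{h}}$ part (two $\mathfrak{h}$-arguments pair trivially) and leaves only the $\alpha$-evaluations, giving \eqref{eq5.4}.

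For part (ii), the argument is structurally identical, working in $\mathfrak{k}=\mathfrak{h}\oplus\Pi(\mathfrak{h}^*)$ with the odd form $\kappa$ from \eqref{omegaTstarodd} and the module action $\chi(u)=(-1)^{|u|}\,\Pi\circ\rho(u)\circ\Pi$. I would reuse the case analysis above verbatim in spirit, tracking the extra parity shift introduced by $\Pi$ on the dual factor. The point is that the sign $(-1)^{(|\zeta|+1)|u|}$ in \eqref{omegaTstarodd} differs from the even case by exactly the parity shift of $\Pi$, so after substitution the $\beta(u,v)(w)$ terms appear pre-composed with $\Pi$, yielding \eqref{eq5.4b}. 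The cleanest route is to note that $\kappa$ and $\omega$ are related through $\Pi$ on the second summand, so I can transport the computation of (i) along this identification rather than redo every sign from scratch.

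The step I expect to be the main obstacle is the single-dual-argument case in part (i): verifying that it contributes nothing new requires correctly matching the sign convention in the definition of $\rho$ from \eqref{eq5.1} against the upsetting/Gram-matrix sign conventions for $\omega$ in \eqref{omegaTstar}, and confirming that what drops out is the already-established representation property rather than a hidden extra constraint. This is where an off-by-a-sign error would silently change \eqref{eq5.4}, so I would double-check it by evaluating on a purely even $u,v$ and a single generator $\xi$ before trusting the general homogeneous computation. Everything else is routine sign-chasing once the abelianness of the dual ideal and the representation identity are in hand.
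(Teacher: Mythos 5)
The paper itself does not prove this lemma---it defers to \cite{BM}---so your proposal has to stand on its own. Your overall strategy (evaluate the scalar $2$-cocycle identity on the strata ``all three arguments in $\mathfrak{h}$'', ``exactly one dual argument'', ``at least two dual arguments'', using that $\omega$ pairs $\mathfrak{h}$ with $\mathfrak{h}^*$ and vanishes on each summand separately) is the right and standard one, and two of the three cases are handled correctly: with at least two dual arguments everything dies for the reasons you give, and with all three arguments in $\mathfrak{h}$ only the $\alpha$-evaluations survive and the cyclic sum is exactly \eqref{eq5.4} (resp.\ \eqref{eq5.4b}).

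The gap is in the single-dual-argument case, which you correctly flag as the delicate one but then discharge with the wrong identity. Taking $x=u$, $y=v\in\mathfrak{h}$ and $z=\xi\in\mathfrak{h}^*$ and substituting \eqref{eq5.1} and \eqref{omegaTstar} into the closedness identity, the three terms collapse (after the sign bookkeeping) to
\[
-(-1)^{|\xi||v|}\,\xi\bigl(\nabla_u v-(-1)^{|u||v|}\nabla_v u-[u,v]_{\mathfrak h}\bigr)
=-(-1)^{|\xi||v|}\,\xi\bigl(T^{\nabla}(u,v)\bigr),
\]
i.e.\ this case is equivalent to the \emph{torsion-freeness} of $\nabla$, not to ``the compatibility of $\rho$ with the bracket''. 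The representation property of $\rho$ (equivalently, flatness of $\nabla$) is what the \emph{Jacobi identity} of $[\cdot,\cdot]_{\mathfrak g}$ consumes; it cannot arise here, since a single bracket evaluation never produces the $\nabla_u\nabla_v$ terms that the representation identity involves. Consequently, if one assumes only that $\nabla$ is flat, the ``if and only if'' in the lemma is false, and your computation would terminate with an undischarged term $\xi(T^{\nabla}(u,v))$. The fix is to invoke the standing hypothesis of the Lagrangian-extension setup that $\nabla$ is torsion-free (as the paper states in the introduction and as \eqref{TorCur} is set up for). The same correction applies verbatim to part (ii): the parity shift introduced by $\Pi$ changes the signs but not the conclusion that the mixed case encodes $T^{\nabla}=0$.
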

\begin{proof}
See, \cite{BM}.
\end{proof}
We arrive to the following theorem proved in \cite{BM} (see also \cite{BC} in the non-super case).

\begin{Theorem} [Lagrangian or $T^*$- and $\Pi T^*$-extensions] \label{Tstar}
Let $(\fh, \nabla)$ be a flat Lie superalgebra. To every 2-cocycle $\alpha\in Z^2(\mathfrak{h}, \mathfrak{h}^*)$ \textup{(}resp. $\beta\in Z^2(\mathfrak{h}, \Pi(\mathfrak{h}^*))$\textup{)} which satisfies \eqref{eq5.4}  \textup{(}resp. \eqref{eq5.4b}\textup{)} one can canonically associate a strongly polarized quasi-Frobenius Lie superalgebra $\textup{(}\mathfrak{g}, \omega, \mathfrak{h}^*, \mathfrak{h})$ \textup{(}resp. $\textup{(}\mathfrak{k}, \kappa,\Pi(\mathfrak{h}^*), \mathfrak{h}$\textup{)}, where $\mathfrak{g}=\mathfrak{h}\oplus \mathfrak{h}^*$ \textup{(}resp. $\mathfrak{k}=\mathfrak{h}\oplus \Pi(\mathfrak{h}^*)$\textup{)} and $\omega$ \textup{(}resp. $\kappa$\textup{)} is given as in \eqref{omegaTstar} \textup{(}resp.~\eqref{omegaTstarodd}\textup{)}.
\end{Theorem}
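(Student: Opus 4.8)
The plan is to recognize the $T^*$-extension $\fg=\fh\oplus\fh^*$ as the abelian extension of $\fh$ by the module $\fh^*$ determined by the representation $\rho$ of \eqref{eq5.1} together with the cochain $\alpha$, and then to verify separately that the canonical even form $\omega$ of \eqref{omegaTstar} is orthosymplectic and closed and that $(\fh^*,\fh)$ is a strong polarization. I would carry out the even case in full and obtain the odd case by transporting the entire argument through the parity functor $\Pi$.

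First I would confirm that the prescribed bracket is a Lie superbracket. Super anti-commutativity is immediate: on $\fh\times\fh$ it follows from the anti-commutativity of $[\cdot,\cdot]_\fh$ together with the graded skew-symmetry of the cochain $\alpha$; on $\fh\times\fh^*$ it follows by setting $[\xi,u]_\fg=-(-1)^{|u||\xi|}\rho(u)\cdot\xi$; and on $\fh^*\times\fh^*$ it holds because $[\fh^*,\fh^*]_\fg=0$. For the super Jacobi identity I would split into cases according to the number of arguments lying in $\fh^*$. When two or three arguments lie in $\fh^*$, every term vanishes since $\fh^*$ is abelian and stable under $\rho$. When exactly one argument lies in $\fh^*$, the identity collapses to $\rho(u)\rho(v)-(-1)^{|u||v|}\rho(v)\rho(u)=\rho([u,v]_\fh)$, which is precisely the assertion that $\rho$ is a representation, i.e. the Dual Representation Lemma. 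When all three arguments lie in $\fh$, the $\fh$-component of the identity is the Jacobi identity in $\fh$, while the $\fh^*$-component is exactly the Chevalley--Eilenberg 2-cocycle condition expressing $\alpha\in Z^2(\fh,\fh^*)$. Hence the hypothesis that $\alpha$ is a cocycle is what makes $\fg$ a Lie superalgebra.

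Next I would check that $\omega$ is quasi-Frobenius and that the polarization is strong. Evenness of $\omega$ is visible from \eqref{omegaTstar}; non-degeneracy follows because $\omega$ restricts to the perfect evaluation pairing between $\fh$ and $\fh^*$; and anti-symmetry I would verify directly against the upsetting criterion $u(B)=-B$ of \eqref{BilSy}, where the needed signs are exactly those built into \eqref{omegaTstar}. Closedness of $\omega$ is not automatic and is governed by the second hypothesis on $\alpha$: by Lemma~\ref{condalphabeta}(i) the condition \eqref{eq5.4} is equivalent to $\omega$ being a scalar $2$-cocycle on $\fg$, so I would simply invoke that lemma. For the strong polarization, $\fh^*$ is an ideal since $[\fh,\fh^*]_\fg\subseteq\fh^*$ and $[\fh^*,\fh^*]_\fg=0$, and it is Lagrangian because $\omega|_{\fh^*\times\fh^*}=0$ while $\dim\fh^*=\tfrac12\dim\fg$ forces $(\fh^*)^\perp=\fh^*$; the complementary subspace $\fh$ is isotropic for the same reason and hence also Lagrangian. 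This yields the strongly polarized quasi-Frobenius Lie superalgebra $(\fg,\omega,\fh^*,\fh)$.

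For the $\Pi T^*$-extension I would run the identical argument with $\fh^*$ replaced by $\Pi(\fh^*)$, $\rho$ by the representation $\chi$ of \eqref{eq5.1b}, $\alpha$ by $\beta$, and $\omega$ by the odd form $\kappa$ of \eqref{omegaTstarodd}; closedness then comes from Lemma~\ref{condalphabeta}(ii) and the cocycle condition \eqref{eq5.4b}. The main obstacle I anticipate is purely the graded sign bookkeeping: the Koszul signs attached to $\rho(u)\cdot\xi=-(-1)^{|u||\xi|}\xi\circ\nabla_u$, to the upsetting operator in \eqref{BilSy}, and to the extra parity shift entering $\chi$ and $\kappa$ must all be tracked precisely, so that the one-argument-in-$\fh^*$ case reproduces the representation identity verbatim and the all-arguments-in-$\fh$ case reproduces the cocycle identity verbatim. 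Once the conventions of \eqref{eq5.1}, \eqref{eq5.1b} and \eqref{BilSy} are fixed, each case reduces to a finite and routine sign check rather than a conceptual difficulty.
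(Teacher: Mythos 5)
Your proposal is correct. Note that the paper itself does not prove Theorem~\ref{Tstar}; it simply defers to the reference \cite{BM}, so there is no in-text argument to compare against. Your direct verification is the standard one and places each hypothesis exactly where it is needed: flatness of $\nabla$ enters through the Dual Representation Lemma to settle the Jacobi case with one argument in $\fh^*$, the cocycle condition $\alpha\in Z^2(\fh,\fh^*)$ settles the $\fh^*$-component of the Jacobi identity on $\fh\times\fh\times\fh$, condition \eqref{eq5.4} via Lemma~\ref{condalphabeta} gives closedness of $\omega$, and the evaluation pairing gives non-degeneracy and the Lagrangian property of both $\fh^*$ (an ideal) and its complement $\fh$ (only a subspace, which is all a strong polarization requires); the $\Pi T^*$ case transports correctly through $\chi$, $\beta$, $\kappa$.
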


The converse of this theorem says that every orthosymplectic or periplectic quasi-Frobenius Lie superalgebra $(\mathfrak{g}, \omega)$ that has a homogeneous Lagrangian ideal $\mathfrak{a}$ can be obtained as a $T^*$-extension  or $\Pi T^*$-extension of a flat Lie superalgebra. 

Let $(\mathfrak{g}, \omega, \mathfrak{a},N)$ be a strongly polarized quasi-Frobenius Lie superalgebra. Here the form $\omega$ is either orthosymplectic or periplectic. Consider the quotient space 
$\mathfrak{h}=\mathfrak{g}/\mathfrak{a}$ (recall that $\fa^\perp=\fa$). We will construct a flat connection $\nabla$ on $\mathfrak{h}$ (see \cite{BM}), superizing the construction of \cite{BC}. For every homogeneous $u,v\in \fh$, we  denote by $\tilde u$ and $\tilde v$ their lift to $\fg$. We then write 
\begin{equation}\label{eq5.5}
\omega_\mathfrak{h}(\nabla_uv, a)= - (-1)^{|u||v|} \omega_\mathfrak{g} (\tilde u, [\tilde v, a]).
\end{equation}
The pair $(\mathfrak{h},\nabla)$ is called {\it the quotient flat Lie superalgebra associated with the Lagrangian ideal} $\mathfrak{a}$ of $(\mathfrak{g}, \omega)$.

The following theorem is needed to determine whether the Lie superalgebras in Backhouse's list arise as Lagrangian extensions of smaller Lie superalgebras. We refer to \cite{BM} for its proof.

\begin{Theorem} [Converse of Theorem \ref{Tstar}] \label{TstarConv} Let $(\mathfrak{g}, \omega, \mathfrak{a},N)$ be a strongly polarized ortho-symplectic or periplectic quasi-Frobenius Lie superalgebra and $(\mathfrak{h}, \nabla)$ be its associated quotient flat Lie superalgebra. 

\textup{(}i\textup{)} If the form $\omega$ is orthosymplectic, then there exists $\alpha \in Z^2(\mathfrak{h}, \mathfrak{h}^*)$ 
satisfying Eq. \eqref{eq5.4}, such that $(\mathfrak{g}, \omega, \mathfrak{a},N)$ is isomorphic to the $T^*$-extension of $(\mathfrak{h},\nabla)$ by means of $\alpha$.

\textup{(}ii\textup{)} If the form $\omega$ is periplectic, then there exists $\beta \in Z^2(\mathfrak{h}, \Pi(\mathfrak{h}^*))$ 
satisfying Eq. \eqref{eq5.4b}, such that $(\mathfrak{g}, \omega, \mathfrak{a},N)$ is isomorphic to the $\Pi T^*$-extension of $(\mathfrak{h},\nabla)$ by means of $\beta$.

\end{Theorem}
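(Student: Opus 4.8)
The plan is to prove Theorem \ref{TstarConv} by explicitly reconstructing the cocycle from the given strongly polarized data and then verifying that the reconstruction recovers the original bracket and form. Both parts (i) and (ii) follow the same template, so I would treat the orthosymplectic case in full and indicate the parity bookkeeping changes needed for the periplectic case. First I would use the Lagrangian decomposition $\fg = \fa \oplus N$ to identify $N$ with $\fh = \fg/\fa$, and then use non-degeneracy of $\omega$ together with $\fa^\perp = \fa$ to identify the ideal $\fa$ with $\fh^*$ (resp. $\Pi(\fh^*)$ in the odd case). Concretely, since $\omega$ restricts to a non-degenerate pairing between $\fa$ and $N$ (as $N$ is Lagrangian and complementary), the map $a \mapsto \omega(a, -)|_N$ gives an isomorphism $\fa \cong N^* \cong \fh^*$; in the periplectic case this map is odd, producing the parity shift $\Pi$. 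Under these identifications the decomposition $\fg = \fh \oplus \fh^*$ (resp. $\fh \oplus \Pi(\fh^*)$) is fixed, and I would verify that $\omega$ takes the normal form \eqref{omegaTstar} (resp. \eqref{omegaTstarodd}), which is essentially forced by the Lagrangian-pairing structure.

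Next I would define the cocycle by projecting the bracket onto $\fa$: for $u, v \in \fh$ with lifts $\tilde u, \tilde v \in N$, set $\alpha(u,v) := \pr_{\fa}[\tilde u, \tilde v]_{\fg}$, and transport it to an $\fh^*$-valued form via the identification $\fa \cong \fh^*$ above (resp. $\beta$ into $\Pi(\fh^*)$ in the odd case). I would then check the three structural facts in turn. First, that $\rho$ (resp. $\chi$) from \eqref{eq5.1} (resp. \eqref{eq5.1b}), built from the quotient connection $\nabla$ of \eqref{eq5.5}, coincides with the adjoint action of $\fh$ on the ideal $\fa$ under our identification — this is where the definition \eqref{eq5.5} of $\nabla$ has to dovetail exactly with $\rho(u)\cdot\xi = -(-1)^{|u||\xi|}\xi \circ \nabla_u$. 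Second, that $\alpha \in Z^2(\fh, \fh^*)$, i.e. the projected bracket satisfies the $2$-cocycle condition; this is inherited from the super Jacobi identity in $\fg$ modulo $\fa$ combined with the module structure just verified. Third, that $\alpha$ satisfies the closedness constraint \eqref{eq5.4} (resp. \eqref{eq5.4b}) — this follows from closedness of $\omega$ on $\fg$ by feeding three lifts from $N$ into the cocycle identity and reading off the $\fa$-component.

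With these pieces in place, I would assemble the isomorphism: the $T^*$-extension of $(\fh, \nabla)$ built from $\alpha$ via Theorem \ref{Tstar} has underlying space $\fh \oplus \fh^*$, and the identity map on $\fh \oplus \fa \cong \fh \oplus \fh^*$ is a candidate isomorphism. I would verify it is a Lie superalgebra homomorphism by comparing brackets componentwise: the $\fh\times\fh$ bracket matches because the $\fh$-component of $[\tilde u, \tilde v]_\fg$ is $[u,v]_\fh$ (since $\fa$ is an ideal and $\fh = \fg/\fa$) while the $\fa$-component is $\alpha(u,v)$ by construction; the $\fh \times \fh^*$ bracket matches by the module-action identification; and the $\fh^* \times \fh^*$ bracket vanishes on both sides because $\fa = \fh^*$ is abelian (being Lagrangian, $[\fa,\fa] \subseteq \fa \cap \fa^\perp$-type constraints force this, which I would confirm from $\fa^\perp = \fa$ and closedness). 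Finally I would check the map intertwines $\omega$ with the normal-form $\omega$ of \eqref{omegaTstar}, which is immediate from the normal-form computation in the first step.

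The main obstacle I expect is the third verification: showing that closedness of $\omega$ translates precisely into the constraint \eqref{eq5.4} (resp. \eqref{eq5.4b}), rather than some weaker or differently-signed condition. This is the step where all the sign conventions collide — the upsetting formula \eqref{susyB}, the parity-dependent definition \eqref{martBil} of the Gram matrix, the sign in \eqref{eq5.5}, and the Koszul signs in the closedness identity must be tracked with care, and the periplectic case \eqref{eq5.4b} carries the extra $\Pi$ and the shifted sign $(-1)^{(|\zeta|+1)|u|}$ from \eqref{omegaTstarodd}. A secondary subtlety is confirming that $N$ being merely a Lagrangian \emph{subspace} (not a subalgebra) is enough: the bracket $[\tilde u, \tilde v]_\fg$ need not lie in $N$, so its $N$-component is what descends to $[u,v]_\fh$ and its $\fa$-component is $\alpha(u,v)$, and I would make sure this splitting is well-defined independent of the choice of lift, which uses that $\fa$ is an ideal.
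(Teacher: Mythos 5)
Your proposal is correct and follows the standard reconstruction argument: the paper itself gives no proof of Theorem \ref{TstarConv} beyond citing \cite{BM}, and the proof there (superizing Baues--Cort\'es) is exactly what you describe — identify $N\cong\fh$ and $\fa\cong\fh^*$ (or $\Pi(\fh^*)$) via the Lagrangian pairing, set $\alpha(u,v)=\pr_{\fa}[\tilde u,\tilde v]$, match the $\fa$-action with the dual representation built from \eqref{eq5.5}, and derive the cocycle and closedness constraints from Jacobi and $d\omega=0$. Your identification of the sign bookkeeping and of $[\fa,\fa]=0$ (from $\fa^\perp=\fa$, the ideal property, and closedness) as the delicate points is accurate.
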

\section{The 4-dimensional Lie superalgebras as Lagrangian extensions} \label{Tstar2}
Using Theorem \ref{Tstar} and Theorem  \ref{TstarConv} above,  we will determine whether the 4-dimensional real Lie superalgebras classified by Backhouse (see the Appendix) are isomorphic to a $T^*$-extension or to a $\Pi T^*$-extension of a smaller Lie superalgebra. 

We will outline the proof and computation for the Lie superalgebra $D^6$ (row 2 in the table); however, we will only exhibit the connection $\nabla$ for the other Lie superalgebras. 

\begin{enumerate}
    \item \underline{The Lie superalgebra $D^5$}: We correct here the description of the  Lie superalgebra $D_5$ in \cite{BM}. It is  $\Pi T^*$-extension of the Lie superalgebra $\mathfrak{h}=\mathrm{Span}\{e_1 | e_4\}$ with the bracket $[e_1,e_4]_{\mathfrak{h}} = e_4$, and where $\Pi \mathfrak{h}^*$ is spanned by $e_2 = -\Pi(e_4^*) | e_3 = \Pi(e_1^*)$. The 2-form $\beta \equiv 0$ and the connection $\nabla$ on $\mathfrak{h}$ is given by:
\[\nabla_{e_4}(e_1) = -e_4,\quad  \nabla_{e_1}(e_1) = -e_1, \quad  \nabla_{e_1}(e_4) = 0, \quad \nabla_{e_4}(e_4) = 0.\]

    \item \underline{The Lie superalgebra $D^6$:} We will  show first that this Lie superalgebra does have a Lagrangian ideal. Indeed, consider $\mathfrak{a}=\mathrm{Span}\{e_3, e_4 \} $. This is obviously an ideal because of the multiplication table. Let us show it is Lagrangian. We have
    \[\mathfrak{a}^{\bot}=\{v\in D^6 \;|\; \omega(v, x)=0, \forall x\in \mathfrak{a} \}, \]
    where the form we are considering is given by  $\omega=e_3^*\wedge e_1^* + e_4^*\wedge e_2^* $. 
    Let $v=l_1e_1+l_2e_2+l_3e_3+l_4e_4$. We have 
    \[ 0 = \omega(v,e_3)= -l_1 \implies l_1=0,\]
    \[ 0 = \omega(v,e_4)= -l_2 \implies l_2=0.\]
    This implies that $\mathfrak{a}^{\bot}=\mathfrak{a}$. The Lie superalgebra $\mathfrak h$ is given by $\mathrm{Span}\{e_1 , e_2\}$ with the bracket $[e_1,e_2]_{\mathfrak{h}} = 0$. The connection $\nabla$ is given by
     \[
     \nabla_{e_1}(e_1) = -e_1,\quad  \nabla_{e_1}(e_2) = -e_2, \quad  \nabla_{e_2}(e_1) = -e_2, \quad \nabla_{e_2}(e_2) = e_1.
     \]

    Now, we compute the torsion and curvature of $\nabla$, using Eqns. (\ref{TorCur}, \ref{Cur}). For the torsion, we have
    \[ 
T(e_1,e_2) = \nabla_{e_1}(e_2) -(-1)^{|e_1||e_2|}\nabla_{e_2} (e_1) - [e_1,e_2]_{\mathfrak h} =
    -e_2-(-e_2)-0=0.
    \]
    For the curvature, we have   
    \[ 
    \begin{array}{lcl} R^\nabla(e_1,e_2)(e_1) & = &  \nabla _{e_1} \nabla_{e_2} (e_1)-(-1)^{|e_1||e_2|}\nabla_{e_2} \nabla_{e_1} (e_1) - \nabla_{[e_1,e_2]_{\mathfrak h}}(e_1)\\[2mm]
    &= & \nabla_{e_1}(-e_2)-\nabla_{e_2}(-e_1)=e_2-e_2=0.
    \end{array}
    \]
    Similarly, 
    \[ \begin{array}{lcl}
    R^\nabla(e_1,e_2)(e_2) & = & \nabla _{e_1} \nabla_{e_2} (e_2)-(-1)^{|e_1||e_2|}\nabla_{e_2} \nabla_{e_1} (e_2) - \nabla_{[e_1,e_2]_{\mathfrak h}}(e_2)\\[2mm]
    & = & 
    \nabla_{e_1}(e_1)-\nabla_{e_2}(-e_2)=-e_1+e_1=0.
    \end{array}
    \]
  Therefore, the connection is torsion-free and flat.
    
Thus, the Lie superalgebra $D^6$ is a $\Pi T^*$-extension of the Lie superalgebra $\mathfrak{h}$,  where $\Pi \mathfrak{h}^*$ is spanned by $e_3 = \Pi(e_1^*) | e_4 = \Pi(e_2^*)$.  The 2-form $\beta \equiv 0$.

    \item \underline{The Lie superalgebra $D^{7}_{pq}$:} This Lie superalgebra is not quasi-Frobenius.\\

    \item \underline{The Lie superalgebra $D^{7}_{-1q}$, with $q\le1$:} This Lie superalgebra is a $\Pi T^*$-extension of the Lie superalgebra $\mathfrak{h}=\mathrm{Span}\{e_1 , e_2\}$ with the bracket $[e_1,e_2]_{\fh} = e_2$, and where $\Pi \mathfrak{h}^*$ is spanned by $e_4 = \Pi(e_1^*) , e_3 = \Pi(e_2^*)$. The 2-form $\beta \equiv 0$ and the connection $\nabla$ is given by
    
    \[\nabla_{e_1}(e_1) = -qe_1,\quad  \nabla_{e_1}(e_2) = e_2, \quad  \nabla_{e_2}(e_1) = 0, \quad \nabla_{e_2}(e_2) = 0.\]

    
    
    \item \underline{The Lie superalgebra $D^{7}_{pq}$, with $p=-q, q\neq 0, -1 $:} This Lie superalgebra is a $T^*$-extension of the Lie superalgebra $\mathfrak{h}=\mathrm{Span}\{e_1 | e_4\}$ with the bracket $[e_1,e_4]_\fh = -pe_4$, and where $\mathfrak{h}^*$ is spanned by $e_2 = e_1^{*}|e_3 = e_4^{*}$. The 2-form $\alpha \equiv 0$ and the connection $\nabla$ is given by
    
    \[\nabla_{e_1}(e_1) = -e_1,\quad  \nabla_{e_1}(e_4) = -pe_4, \quad  \nabla_{e_4}(e_1) = 0, \quad \nabla_{e_4}(e_4) = 0.\]

    \item \underline{The Lie superalgebra $D^{7}_{pq}$, with $p=-q, q= -1 $:} In the case where the form is odd, then this Lie superalgebra is a $\Pi T^*$-extension of the Lie superalgebra $\mathfrak{h}=\mathrm{Span}\{e_1 ,  e_2\}$ with the bracket $[e_1,e_2]_\fh = e_2$, and where $\Pi \mathfrak{h}^*$ is spanned by $e_3 = \Pi(e_1^{*}) ,  e_4 = \Pi(e_2^{*})$. The 2-form $\beta \equiv 0$ and the connection $\nabla$ is given by
    
    \[\nabla_{e_1}(e_1) = -e_1,\quad  \nabla_{e_1}(e_2) = e_2, \quad  \nabla_{e_2}(e_1) = 0, \quad \nabla_{e_2}(e_2) = 0.\]

    In the case where the form is even, then this Lie superalgebra is a $ T^*$-extension of the Lie superalgebra $\mathfrak{h}=\mathrm{Span}\{e_1 | e_4\}$ with the bracket $[e_1,e_4]_\fh = -e_4$, and where $\mathfrak{h}^*$ is spanned by $e_2 = e_1^{*} | e_3 = e_4^{*}$. The 2-form $\alpha \equiv 0$ and the connection $\nabla$ is given by
    
    \[\nabla_{e_1}(e_1) = -e_1,\quad  \nabla_{e_1}(e_4) = -e_4, \quad  \nabla_{e_4}(e_1) = 0, \quad \nabla_{e_4}(e_4) = 0.\]

    \item \underline{The Lie superalgebra $D^{8}_{p}$, with $p\neq 0$:} This Lie superalgebra is not quasi-Frobenius. 

    \item \underline{The Lie superalgebra $D^{8}_{-1}$:} This Lie superalgebra is a $\Pi T^*$-extension of the Lie superalgebra $\mathfrak{h}=\mathrm{Span}\{e_1 ,  e_2\}$ with the bracket $[e_1,e_2]_\fh = e_2$, and where $\Pi \mathfrak{h}^*$ is spanned by $e_3 = \Pi(e_1^{*}) ,  e_4 = \Pi(e_2^{*})$. The 2-form $\beta \equiv 0$ and the connection $\nabla$ is given by
    
    \[\nabla_{e_1}(e_1) = e_1-e_2,\quad  \nabla_{e_1}(e_2) = e_2, \quad  \nabla_{e_2}(e_1) = 0, \quad \nabla_{e_2}(e_2) = 0.\]

    \item \underline{The Lie superalgebra $D^{9}_{pq}$, with $q> 0$:} This Lie superalgebra is not quasi-Frobenius. 

    \item \underline{The Lie superalgebra $D^{10}_{q}$ with $q\not =-1$:} This Lie superalgebra is a $\Pi T^*$-extension of the Lie superalgebra $\mathfrak{h}=\mathrm{Span}\{e_1 | e_4\}$ with the bracket $[e_1,e_4]_\fh = qe_4$, and where $\Pi \mathfrak{h}^*$ is spanned by $e_2 = \Pi(e_4^{*}) | e_3 =-(q+1) \Pi(e_1^{*})$. The 2-form $\beta \equiv 0$ and the connection $\nabla$ is given by
    
    \[\nabla_{e_1}(e_1) = -(q+1)e_1,\quad  \nabla_{e_1}(e_4) = -e_4, \quad  \nabla_{e_4}(e_1) =-(q+1) e_4, \quad \nabla_{e_4}(e_4) =0.\]

    \item \underline{The Lie superalgebra $(D^{7}_{1/2\; 1/2})^1$:} This Lie superalgebra does not admit a 2-dimensional Lagrangian ideal. Indeed, let us suppose the contrary, and let us  denote by $\mathfrak{a}=\mathrm{Span}\{X,Y\}$ such an ideal. Let us write $X=x_1e_1+x_2e_2+x_3e_3+x_4e_4$ and $Y=y_1e_1+y_2e_2+y_3e_3+y_4e_4$. 
    We compute the brackets  $[e_i,X]$,  for all $i\in \{1,2,3,4\}$. we obtain 
    \[ 
    \begin{array}{ll}
    [e_1, X]=x_2e_2 + \dfrac{1}{2}x_3 e_3+ \dfrac{1}{2}x_4 e_4, &  [e_2, X]=-x_1e_2,\\[2mm]
    [e_3,X]=-\dfrac{1}{2}x_1 e_3+x_3 e_2, & [e_4, X]=-\dfrac{1}{2}x_1 e_4+x_4 e_2.
    \end{array}
    \]
    If $x_1 \neq 0 $, from the second bracket we get that $e_2 \in \mathfrak{a}$, followed by $e_3 \in \mathfrak{a}$ and $e_4 \in \mathfrak{a}$, which implies that the ideal is not of dimension 2. We are in the case where $x_1=0$ (and similarly $y_1=0$).
    \[
    \begin{array}{ll}
    [e_1, X]=x_2e_2 + \dfrac{1}{2}x_3 e_3+ \dfrac{1}{2}x_4 e_4, &  [e_2, X]=0,\\[2mm]
    [e_3,X]=x_3 e_2, & [e_4, X]=x_4 e_2.
    \end{array}
    \]
    
    We consider the following cases:
    \begin{enumerate}
        \item If $x_3=x_4=0$. In this case,  $e_2 \in \mathfrak{a}$ and we can write $X=e_2$ and $Y=y_2e_2+y_3e_3+y_4e_4$. We  can simplify and write $Y=z_3e_3+z_4e_4$. Since\footnote{As in \cite{BM, BEM, BE}, we adopt the following convention: $\langle e_i^*, e_j\rangle=\delta_{ij}$ and  $\langle e_i^*\otimes e_j^*, e_k\otimes e_l\rangle= (-1)^{|e_k||e_j|} \langle e_i^*, e_k \rangle \langle e_j^*, e_l \rangle$.} $\omega(Y,Y)=z_3^2+z_4^2\not =0,$ it follows that $\mathfrak{a}$ is not Lagrangian. 
        
        \item If $x_3=0,$ and $ x_4\neq 0$.  Looking at brackets again we get $[e_1, X]=x_2e_2 + \dfrac{1}{2}x_4 e_4,$ and $ [e_4, X]=x_4 e_2$, which implies $e_2, e_4 \in \mathfrak{a}$, so $\mathfrak{a}=\mathrm{Span}\{ e_2, e_4\} $. We have to check that $\mathfrak{a}$ is not Lagrangian. Indeed, $\omega(e_4, e_4)=1$.
        
        \item If $x_3 \neq 0,$ and $ x_4= 0$. This case will be  similar to the previous one since the equations are symmetrical in $x_3$ and $x_4$, which implies that no Lagrangian ideal exists.  
        
        \item If $x_3 \neq 0,$ and $ x_4\neq 0$. Since all the previous cases showed that we cannot find a Lagrangian ideal, and since the proof is symmetrical in X and Y without loss of generality we can state that $y_3 \neq 0, y_4\neq 0$. This implies that $e_2 \in \mathfrak{a}$ and that $x_2 e_2+\dfrac{1}{2}x_3 e_3 +\dfrac{1}{2}x_4 e_4 \in \mathfrak{a}$. We can write $X=e_2$ and $Y=z_3e_3+z_4e_4$. Once again, $\omega(Y,Y)=z_3^2+z_4^2\not =0$ which implies that $\mathfrak{a}$ is not Lagrangian.
    \end{enumerate}

    \item \underline{The Lie superalgebra $(D^{7}_{1/2\; 1/2})^2$:} This Lie superalgebra is a $T^*$-extension of the Lie superalgebra $\mathfrak{h}=\mathrm{Span}\{e_1 | x:=e_3-e_4\}$ with the bracket $[e_1,x]_\fh = \dfrac{1}{2}x$, and where $\mathfrak{h}^*$ is spanned by $e_2 = e_1^{*} | e_3+e_4 = -2x^{*}$. The 2-form $\alpha \equiv 0 $ and the connection $\nabla$ is given by
    
    \[\nabla_{e_1}(e_1) = -e_1,\quad  \nabla_{e_1}(x) = - \frac{1}{2}x, \quad  \nabla_{x}(e_1) = -x, \quad \nabla_{x}(x) = 0.\]

    \item \underline{The Lie superalgebra $(D^{7}_{1/2\; 1/2})^3$:} This Lie superalgebra is not quasi-Frobenius. 
    
    \item \underline{The Lie superalgebra $(D^{7}_{1-p,p})$, with $p\le \dfrac{1}{2}$:} This Lie superalgebra is a $T^*$-extension of the Lie superalgebra $\mathfrak{h}=\mathrm{Span}\{e_1 | e_4\}$ with the bracket $[e_1,e_4]_\fh = (1-p)e_4$, and where $\mathfrak{h}^*$ is spanned by $e_2 = - e_1^{*} | e_3 = e_4^{*}$. The 2-form $\alpha \equiv 0$ and the connection $\nabla$ is given by
    
    \[\nabla_{e_1}(e_1) =- e_1,\quad  \nabla_{e_1}(e_4) = - p e_4, \quad  \nabla_{e_4}(e_1) = - e_4, \quad \nabla_{e_4}(e_4) = 0.\]

    \item \underline{The Lie superalgebra $(D^{8}_{1/2})$:} This Lie superalgebra is not quasi-Frobenius. 

    \item \underline{The Lie superalgebra $(D^{9}_{1/2, p})$, with $p>0$:} This Lie superalgebra does not admit a 2-dimensional Lagrangian ideal. Indeed, let us suppose there exists one, denoted by $\mathfrak{a}=\mathrm{Span}\{X,Y\}$. Let us write $X=x_1e_1+x_2e_2+x_3e_3+x_4e_4$ and $Y=y_1e_1+y_2e_2+y_3e_3+y_4e_4$. 
    We compute the brackets  $[e_i,X]$,  for all $i\in \{1,2,3,4\}$. We obtain 
    \[ 
    \begin{array}{l}
    [e_1, X]=x_2e_2 + \dfrac{1}{2}x_3 e_3 - x_3pe_4+x_4pe_3+\dfrac{1}{2}x_4e_4, \\[2mm]
    [e_2, X]=-x_1e_2,\\[2mm]
    [e_3,X]=-\dfrac{1}{2}x_1 e_3+px_1e_4+x_3 e_2,\\[3mm]
    [e_4, X]=-\dfrac{1}{2}x_1 e_4-px_1e_3+x_4 e_2.
    \end{array}
    \]
    We will analyze a few cases. If $x_1 \neq 0$, given the second bracket we get that $e_2 \in \mathfrak{a}$ and according to the last brackets $e_3 \in \mathfrak{a}$ and $e_4 \in \mathfrak{a}$. This case is not possible since $\mathfrak{a}$ has dimension 2. 
    This implies that $x_1=0$. We get then
    \[
    \begin{array}{ll}
    [e_1, X]=x_2e_2 + \dfrac{1}{2}x_3 e_3 - x_3pe_4+x_4pe_3+\dfrac{1}{2}x_4e_4, & [e_2, X]=0,\\[2mm]
    [e_3,X]=x_3 e_2, & [e_4, X]=x_4 e_2.
    \end{array}
    \]
    For all the cases $x_3, x_4 =0$ or $\neq 0$ we get that $e_2 \in \mathfrak{a}$. We can write $X=e_2$ and $Y=z_3e_3+z_4e_4$. As $\omega(Y,Y)\not = 0$, it follows that  $\mathfrak{a}$ is not Lagrangian. 
    
    Thus, this Lie superalgebra does not admit a 2-dimensional Lagrangian ideal.
     
    \item \underline{The Lie superalgebra $(D^{10}_{0})^1$:} In the case where the form is odd, this Lie superalgebra is a $\Pi T^*$-extension of the Lie superalgebra $\mathfrak{h}=\mathrm{Span}\{e_1 | e_4\}$ with the bracket $[e_4,e_4]_\fh = e_1$, and where $\mathfrak{h}^*$ is spanned by $e_2 = - \Pi(e_4^{*}) | e_3 = \Pi(e_1^{*})$. The 2-form $\beta \equiv 0$ and the connection $\nabla$ is given by
    
    \[\nabla_{e_1}(e_1) =- e_1,\quad  \nabla_{e_1}(e_4) = -  e_4, \quad  \nabla_{e_4}(e_1) = - e_4, \quad \nabla_{e_4}(e_4) = \frac{1}{2}e_1.\]
    In the case where the form is even, this Lie superalgebra is a $T^*$-extension of the Lie superalgebra $\mathfrak{h}=\mathrm{Span}\{e_1 | e_4\}$ with the bracket $[e_4,e_4]_\fh = e_1$, and where $\mathfrak{h}^*$ is spanned by $e_2 = 2 e_1^{*} | e_3 = e_4^{*}$. The 2-form $\alpha \equiv 0$ and the connection $\nabla$ is given by
    
    \[\nabla_{e_1}(e_1) =- e_1,\quad  \nabla_{e_1}(e_4) = -  e_4, \quad  \nabla_{e_4}(e_1) = - e_4, \quad \nabla_{e_4}(e_4) = \frac{1}{2}e_1.\]

    \item \underline{The Lie superalgebra $(D^{10}_{0})^2$:} In the case where the form is odd, this Lie superalgebra is a $\Pi T^*$-extension of the Lie superalgebra $\mathfrak{h}=\mathrm{Span}\{e_1 | e_4\}$ with the bracket $[e_4,e_4]_\fh = -e_1$, and where $\mathfrak{h}^*$ is spanned by $e_2 = - \Pi(e_4^{*}) | e_3 = \Pi(e_1^{*})$. The 2-form $\beta \equiv 0$ and the connection $\nabla$ is given by
    
    \[\nabla_{e_1}(e_1) =- e_1,\quad  \nabla_{e_1}(e_4) = -  e_4, \quad  \nabla_{e_4}(e_1) = - e_4, \quad \nabla_{e_4}(e_4) = -\frac{1}{2}e_1.\]
    In the case where the form is even, this Lie superalgebra is a $T^*$-extension of the Lie superalgebra $\mathfrak{h}=\mathrm{Span}\{e_1 | e_4\}$ with the bracket $[e_4,e_4]_\fh = -e_1$, and where $\mathfrak{h}^*$ is spanned by $e_2 = 2 e_1^{*} | e_3 = e_4^{*}$. The 2-form $\alpha   \equiv 0$ and the connection $\nabla$ is given by
    
    \[\nabla_{e_1}(e_1) =- e_1,\quad  \nabla_{e_1}(e_4) = -  e_4, \quad  \nabla_{e_4}(e_1) = - e_4, \quad \nabla_{e_4}(e_4) = -\frac{1}{2}e_1.\]

    \item \underline{The Lie superalgebra $(2A_{1,1}+2A)^1$:} This Lie superalgebra is not quasi-Frobenius. 
    \item \underline{The Lie superalgebra $(2A_{1,1}+2A)^2$:} This Lie superalgebra is not quasi-Frobenius. 
    \item \underline{The Lie superalgebra $(2A_{1,1}+2A)^3_p$, for $p=\frac{1}{2}$:} This Lie superalgebra is a $\Pi T^*$-extension of the abelian Lie superalgebra $\mathfrak{h}=\mathrm{Span}\{e_3 , e_4\}$, and where $\Pi \mathfrak{h}^*$ is spanned by $e_1 = -\Pi(e_4^*) , \;  e_2 = \Pi(e_3^{*})$. The connection $\nabla$ is trivial, and the 2-form $\beta$ is given by  
    \[
\beta=\frac{1}{2} \Pi (e_4^*)\otimes  e_3^* \wedge e_3^* -\frac{1}{2} \Pi (e_3^*)\otimes  e_4^* \wedge e_4^*- \frac{1}{2} (\Pi(e_3^*)- \Pi(e_4^*) )\otimes e_3^*\wedge e_4^*.
    \]
    \item \underline{The Lie superalgebra $(2A_{1,1}+2A)^4_p$:} This Lie superalgebra is not quasi-Frobenius. 

    \item \underline{The Lie superalgebra $(C^1_1+A)$:} This Lie superalgebra is a $ T^*$-extension of the abelian Lie superalgebra $\mathfrak{h}=\mathrm{Span}\{e_1 | x:=e_3-2e_4\}$, and where $\mathfrak{h}^*$ is spanned by $e_2 = \frac{1}{2}e_1^{*} | e_3 = x^{*}$. The 2-form $\alpha $ is given by:
    \[ \alpha = e^*_1 \otimes (x^* \wedge x^*) +x^* \otimes (e^*_1 \wedge x^*)\]
    The connection $\nabla$ is given by
    
    \[\nabla_{e_1}(e_1) = -e_1,\quad  \nabla_{e_1}(x) = -x, \quad  \nabla_{x}(e_1) = -x, \quad \nabla_{x}(x) = 0.\]
    
    \item \underline{The Lie superalgebra $(C^1_{1/2}+A)$:} This Lie superalgebra does not admit a Lagrangian ideal of dimension 2. Indeed, let us suppose there is one, denoted by  $\mathfrak{a}=\mathrm{Span}\{X,Y\}$. Let us write $X=x_1e_1+x_2e_2+x_3e_3+x_4e_4$ and $Y=y_1e_1+y_2e_2+y_3e_3+y_4e_4$. 
    We compute the brackets  $[e_i,X]$,  for all $i\in \{1,2,3,4\}$. We obtain
    \[ 
    \begin{array}{l}
    [e_1, X]=x_2e_2 + \dfrac{1}{2}x_3 e_3, \quad  [e_2, X]=-x_1e_2, \quad 
    [e_3,X]=-\dfrac{1}{2}x_1 e_3+x_3 e_2, \quad  [e_4, X]=0.
    \end{array}
    \]
    We will analyze a few cases. 
    
    If $x_1 \neq 0$, given the second bracket implies that $e_2 \in \mathfrak{a}$ and according to the third bracket $e_3 \in \mathfrak{a}$, which implies that $\mathfrak{a}=\mathrm{Span}\{e_2, e_3\}$. Since $\omega(e_3,e_3)\not =0$, it follows that $\mathfrak a$ is not Lagrangian. 
    
    If $x_1=0$, and $x_3 \neq 0$, we get that $e_2 \in \mathfrak{a}$, followed by $e_3 \in \mathfrak{a}$, so $\mathfrak{a}=\mathrm{Span}\{e_2, e_3\}$,  which we showed is not possible.
    It follows that $x_3 =0$, and using the same reasoning as above, we also conclude  that for $Y$, the scalars $y_1=y_3=0$. Looking at the brackets, we either have $x_2=0$ or $e_2 \in \mathfrak{a}$. 
    If $x_2=0$, then $X=x_4 e_4$, and $Y=y_2 e_2 +y_4 e_4$. We can simplify and write $X=e_4$, and $Y=e_2$. Since $\omega(e_4,e_4)\not =0$, it follows that  $\mathfrak{a}$ is not Lagrangian. On the other hand,  if $x_2\neq 0$, then $X=x_2e_2+ x_4 e_4$, and $Y=y_2 e_2 +y_4 e_4$. We can simplify and write $X=e_2+z_4 e_4$, and $Y= e_4$. Again, $\mathfrak{a}$  is not Lagrangian. 
    
    Thus, this Lie superalgebra does not admit a 2-dimensional Lagrangian ideal.
    
    \item \underline{The Lie superalgebra $(C^2_{-1}+A)$:} This Lie superalgebra is not quasi-Frobenuis. 
    \item \underline{The Lie superalgebra $(C^3+A)$:} This Lie superalgebra is a $ T^*$-extension of the Lie superalgebra $\mathfrak{h}=\mathrm{Span}\{e_2 | e_4\}$ with the bracket $[e_4,e_4]_{\mathfrak h} = e_2$, and where $\mathfrak{h}^*$ is spanned by $e_1 = 2 e_2^{*}\, | \, e_3 = e_4^{*}$. The 2-form $\alpha \equiv 0$ and the connection $\nabla$ is given by
    \[\nabla_{e_2}(e_2) = 0,\quad  \nabla_{e_2}(e_4) =0 , \quad  \nabla_{e_4}(e_2) = 0, \quad \nabla_{e_4}(e_4) = \frac{1}{2}e_2.\]
    
    \item \underline{The Lie superalgebra $(C^5_{0}+A)$:} This Lie superalgebra is not quasi-Frobenuis. 

    \item \underline{The Lie superalgebra  $D^1$:} This Lie superalgebra does not admit a non-degenerate homogeneous closed form.

    \item \underline{The Lie superalgebra  $D^2_q, \text{ with } q \not =-1,0 $:} This Lie superalgebra is indeed not quasi-Frobenuis. 

    \item \underline{The Lie superalgebra  $D^2_{-1}$:} This Lie superalgebra does not admit a non-degenerate homogeneous closed form.

    \item \underline{The Lie superalgebra  $D^3_{pq}$:} This Lie superalgebra is not quasi-Frobenuis. 

    \item \underline{The Lie superalgebra  $D^{11}_{pq} \text{ with } 0< |p|\leq |q|\leq 1$:} This Lie superalgebra is not quasi-Frobenuis. 

    \item \underline{The Lie superalgebra  $D^{11}_{pq} \text{ with } (p,q)=(-1,-1)$:} This Lie superalgebra does not admit a non-degenerate homogeneous closed form.

    \item \underline{The Lie superalgebra  $D^{11}_{pq} \text{ with } 0< |p|\leq 1 , p=-q $:} This Lie superalgebra does not admit a non-degenerate homogeneous closed form.

    \item \underline{The Lie superalgebra  $D^{11}_{pq} \text{ with } 0< |p|< 1, q=-1$:} This Lie superalgebra does not admit a non-degenerate homogeneous closed form.

    \item \underline{The Lie superalgebra  $D^{12}$:} This Lie superalgebra does not admit a non-degenerate homogeneous closed form.

    \item \underline{The Lie superalgebra  $D^{13}_p,\text{ with } p\not =0$:} This Lie superalgebra is not quasi-Frobenuis. 

    \item \underline{The Lie superalgebra  $D^{13}_{-1}$:} This Lie superalgebra does not admit a non-degenerate homogeneous closed form.

    \item \underline{The Lie superalgebra  $D^{14}_{pq}, \text{ with } p\not =0, q\geq 0$:} This Lie superalgebra is not quasi-Frobenuis. 

    \item \underline{The Lie superalgebra  $D^{15}$:} This Lie superalgebra does not admit a non-degenerate homogeneous closed form.

    \item \underline{The Lie superalgebra  $D^{16}$:} This Lie superalgebra is not quasi-Frobenuis. 

    \item \underline{The Lie superalgebra  $(A_{3,1}+A)$:} This Lie superalgebra is not quasi-Frobenuis. 

    \item \underline{The Lie superalgebra  $(D^{3}_{p,-1/2}), \text{ with } p\not =0$:} This Lie superalgebra is not quasi-Frobenuis. 

    \item \underline{The Lie superalgebra  $(D^3_{-1/2, -1/2})$:} This Lie superalgebra does not admit a non-degenerate homogeneous closed form.
    
    \item \underline{The Lie superalgebra  $(D^2_{-1/2})^1$:} This Lie superalgebra is  not quasi-Frobenuis. 
    \item \underline{The Lie superalgebra   $(D^2_{-1/2})^2$:} This Lie superalgebra is not quasi-Frobenuis. 
    \item \underline{The Lie superalgebra  $(A_{1,1}+3 A)^1$:} This Lie superalgebra is not quasi-Frobenuis. 
    \item \underline{The Lie superalgebra   $(A_{1,1}+3 A)^2$:} This Lie superalgebra is not quasi-Frobenuis. 
\end{enumerate}
\vspace{3mm}
Tables~\ref{tab:NoQF}--\ref{tab:PTQF} below summarize the outcome of the classification. 
Among the Lie superalgebras considered, 22 do not admit a quasi-Frobenius structure, 
whereas 9 admit a nonhomogeneous structure. 
Of the quasi-Frobenius cases, 4 do not arise as Lagrangian extensions. 
Moreover, 8 are $T^*$-extensions and 10 are $\Pi T^*$-extensions. It is worth mentioning that three Lie superalgebras admit both a closed orthosymplectic and periplectic form, namely 
$(D_0^{10})^1$, $(D_0^{10})^2$, and $D_{pq}^{7}$ with $(p,q)=(1,-1)$.
\begin{table}[ht]
\centering
\renewcommand{\arraystretch}{1.2}
\begin{tabular}{lll}
\hline
\multicolumn{3}{c}{Lie superalgebras without quasi-Frobenius structures} \\
\hline
$(D_{1/2\; 1/2}^7)^3$ & $(D^8_{1/2})$  &  
$(2A_{1,1}+2A)^1$ \\[2mm] 
$(2A_{1,1}+2A)^2$ & 
$D^7_{pq}$ & $(A_{1,1}+3A)^1$\\[2mm]
 $(A_{1,1}+3A)^2$&
$(D^2_{-1/2})^1$ & 
$(D^2_{-1/2})^2$ \\[2mm] $(D^3_{p,-1/2}), \, p\not =0$& 
$(A_{3,1}+A)$&  
$D^{16}$\\[2mm]  
$D^{14}_{pq}, \, p\not =0, q\geq 0$& 
$D^{13}_p, \, p\not = 0$
& $D^3_{pq}$\\[2mm]
$D^{11}_{pq}, \, 0<|p|\leq |q|\leq 1$& 
$D^2_q, \, q\not = -1,0$
& 
$D^9_{pq},\, q>0$\\[2mm] 
$D^8_p, \, p\not =0$
&
$(C_0^5+A)$ &
 $(C^2_{-1}+A)$\\[2mm]  $(2A_{1,1}+2A)^4_p$  & &\\
\hline
\end{tabular}
\caption{No quasi-Frobenius structures}
\label{tab:NoQF}
\end{table}

\begin{table}[ht]
\centering
\renewcommand{\arraystretch}{1.2}
\begin{tabular}{lll}
\hline
\multicolumn{3}{c}{Lie superalgebras with a {\bf nonhomogeneous} quasi-Frobenius structure} \\
\hline
$(D^3_{-1/2\; 1/2})$ &  $D^{15}$&  $D^{13}_{-1}$\\[2mm] $D^{12}$ &  $D^{11}_{pq}, \, 0<|p|<1,\, q=-1$&  $D^{11}_{pq}, \, 0<|p|<1,\, p=-q$\\[2mm] 
$D^{11}_{pq}, \, p=q=-1$ & $D^{2}_{-1}$& $D^{1}$. \\
\hline
\end{tabular}
\caption{Nonhomogeneous}
\label{tab:NHQF}
\end{table}
\begin{table}[ht]
\centering
\renewcommand{\arraystretch}{1.2}
\begin{tabular}{lll}
\hline
\multicolumn{3}{c}{Quasi-Frobenuis Lie superalgebras that are {\bf not} Lagrangian extensions} \\
\hline
$(C_{1/2}^1+A)$ & $(D^9_{1/2\; p}), \, p>0$ & $(D^7_{1/2\; 1/2})^1$\\[2mm] $D^6$ & & \\
\hline
\end{tabular}
\caption{Not Lagrangian extensions}
\label{tab:NLQF}
\end{table}
\begin{table}[ht]
\centering
\renewcommand{\arraystretch}{1.2}
\begin{tabular}{lll}
\hline
\multicolumn{3}{c}{Quasi-Frobenuis Lie superalgebras that are  $T^*$-extensions} \\
\hline
$(C^3+A)$ &   $(C_1^1+A)$ &  $(D_0^{10})^1, \, (|\omega|=\bar 0)$\\[2mm]  
$(D_0^{10})^2, \, (|\omega|=\bar 0)$&  $D^7_{1-p,p}, \, p\leq \frac{1}{2}$ & $(D^7_{1/2 \; 1/2})^2$\\[2mm] 
$D^7_{pq},\, p=-q, q\not =0,-1$ & $D^7_{pq},\, p=-q=1$ $(|\omega|=\bar 0)$ &   \\
\hline
\end{tabular}
\caption{$T^*$-extensions}
\label{tab:TQF}
\end{table}
\begin{table}[ht]
\centering
\renewcommand{\arraystretch}{1.2}
\begin{tabular}{lll}
\hline
\multicolumn{3}{c}{Quasi-Frobenuis Lie superalgebras that are  $\Pi T^*$-extensions} \\
\hline 
$(2A_{1,1}+2A)^3_p, \, p=\frac{1}{2}$&  $(D_0^{10})^2, \, (|\omega|=\bar 1)$ &  $(D_0^{10})^1,\, (|\omega|=\bar 1)$\\ 
$D^{10}_q, \, q \not =-1$ &  $D^6$ &  
$D^7_{-1,q},\, q\leq 1$\\[2mm] $D^7_{p,q}, \, p=-q=1,\, (|\omega|=\bar 1)$ &  $D^5$ &  $D^8_{-1}$\\[2mm] 
$D^7_{p,q}, \, p=-q=1$  & &\\
\hline
\end{tabular}
\caption{$\Pi T^*$-extensions}
\label{tab:PTQF}
\end{table}

\newpage 

\section{Left-symmetric structures on the 4-dimensional real Lie superalgebras}\label{leftsym}
In the list of {\it indecomposable} 4-dimensional real Lie superalgebras, we will list left-symmetric structures that are compatible with the Lie structure. With the exception of $(D_0^{10})^1$ and $(D_0^{10})^2$, all of these structures are Novikov. Moreover, we show that each of these Lie superalgebras admit a Balinsky-Novikov structure. In line with standard practice, only non-zero products are displayed. 
\begin{enumerate}
    \item \underline{The Lie superalgebra $D^5$}: A Novikov structure is given by 
    \[
    e_1\cdot e_3=  e_3,   \quad e_1\cdot e_4=e_4, \quad e_2\cdot e_4=e_3. 
    \]
    Additionally, this structure is also Balinsky-Novikov. As a matter of fact, the non-zero associators are given by
    \[
    (e_1, e_1, e_4)=-e_4, \quad  (e_2, e_1, e_4)=- e_3, \quad (e_1,e_1,e_3)=-e_3, \quad (e_1, e_2, e_4)=-e_3.
    \]
    \item \underline{The Lie superalgebra $D^6$}: A Novikov structure is given by
    \[
e_1\cdot e_3=e_3, \quad e_1\cdot e_4=e_4, \quad e_2\cdot e_3=-e_4, \quad e_2\cdot e_4=e_3.
    \]
     Additionally, this structure is also Balinsky-Novikov.
    \item \underline{The Lie superalgebra $D^7_{pq}$ with ($p\geq q, \; pq\not=0$):} A Novikov structure, depending on a parameter $\gamma \in \mathbb R$, is given by
     \[
e_1\cdot e_1= \gamma e_2, \quad e_1\cdot e_2=e_2, \quad e_1\cdot e_3=p e_3, \quad e_1\cdot e_4=q e_4.
\]
 Additionally, this structure is also Balinsky-Novikov.
 \item \underline{The Lie superalgebra $D^8_{p}$ with $ p\not=0$:} A Novikov structure, depending on a parameter $\gamma \in \mathbb R$, is given by
    \[
    e_1\cdot e_1= \gamma e_2, \quad e_1\cdot e_2=e_2, \quad e_1\cdot e_3=p  e_3, \quad e_1\cdot e_4=e_3+ p e_4.
    \]
     Additionally, this structure is also Balinsky-Novikov.
     \item \underline{The Lie superalgebra $D^9_{pq}$ with $ q>0$:} A Novikov structure, depending on a parameter $\gamma \in \mathbb R$, is given by
      \[
      \begin{array}{llll}
    e_1\cdot e_1= \gamma e_1, &  e_1\cdot e_2=(1+\gamma)e_2, &  e_1\cdot e_3=(p+\gamma)  e_3-q e_4, & e_1\cdot e_4=q e_3+ (p+\gamma) e_4, \\[2mm]
    e_2\cdot e_1=\gamma e_2, &  e_3\cdot e_1=\gamma e_3, &  e_4\cdot e_1=\gamma e_4.
    \end{array}
    \]
    A Balinsky-Novikov structure, depending on a parameter $\gamma \in \mathbb R$, is given by
\[
\begin{array}{lllll}
e_1 \cdot e_1= \gamma e_2, & e_1 \cdot e_2=e_2, & e_1 \cdot e_3=p e_3 -q e_4,&  
e_1 \cdot e_4= q e_3 + p e_4.
\end{array}
\]
\item \underline{The Lie superalgebra $D^{10}_{q}:$} A Novikov structure, depending on a parameter $\gamma \in \mathbb R$, is given by
\[
\begin{array}{llll}
e_1\cdot e_1=-q e_1 +\gamma(q-1)e_2, &  e_1\cdot e_2=(1-q)e_2, & e_1\cdot e_3=e_3, & e_1\cdot e_4=\gamma e_3, \\[2mm] e_2\cdot e_1=-q e_2, & e_3\cdot e_1=-q e_3, &  e_4 \cdot e_1=\gamma e_3 -q e_4, & e_4 \cdot e_2=-e_3.
\end{array}
\]
A Balinsky-Novikov structure is given by
\[
\begin{array}{llll}
e_1\cdot e_1= -e_1, & e_1 \cdot e_3= \frac{1}{2}(1+2q)e_3, & e_1 \cdot e_4=\frac{1}{2}(2q-1)e_4, & 
e_2 \cdot e_1= -e_2, \\[2mm] 
e_2 \cdot e_4=e_3, & e_3 \cdot e_1=-e_3, & e_4\cdot e_1=-e_4.
\end{array}
\]
 \item \underline{The Lie superalgebra $(D^{7}_{1/2\; 1/2})^1:$} A Novikov structure, depending on parameters $\lambda, \gamma \in \mathbb R$, is given by
 \[
 \begin{array}{lllll}
e_1\cdot e_1=-\frac{1}{2} e_1 +\lambda e_2, &  e_1\cdot e_2=\frac{1}{2} e_2, &  e_2\cdot e_1= - \frac{1}{2} e_2, & e_3\cdot e_1=- \frac{1}{2} e_3, & e_3\cdot e_3=\frac{1}{2} e_2\\[2mm]
e_3\cdot e_4= \gamma e_2, &  e_4 \cdot e_1=-\frac{1}{2} e_4, &  e_4 \cdot e_3=- \gamma e_2, & e_4\cdot e_4=\frac{1}{2}e_2.
\end{array}
 \]
 A Balinsky-Novikov structure, depending on parameters $\lambda, \gamma \in \mathbb R$, is given by
 \[
 \begin{array}{llll}
e_1 \cdot e_1= (\gamma -1)e_1+\lambda e_2, & e_1\cdot e_2= \gamma e_2, & e_1\cdot e_3=\frac{\gamma}{2}e_3, & e_1\cdot e_4= \frac{\gamma}{2}e_4,\\[2mm]
e_2\cdot e_1=(\gamma-1)e_2, & e_3\cdot e_1=(\gamma-1)e_3, & e_3\cdot e_3= e_2, & e_4\cdot e_1=(\gamma -1)e_4,\\[2mm]
e_4\cdot e_4=e_2.
 \end{array}
 \]
 \item \underline{The Lie superalgebra $(D^{7}_{1/2\; 1/2})^2:$} A Novikov structure, depending on a parameter $\lambda \in \mathbb R$, is given by
 \[
 \begin{array}{llll}
e_1\cdot e_1=-\frac{1}{2} e_1 +\lambda e_2, &  e_1\cdot e_2=\frac{1}{2} e_2, &  e_2\cdot e_1= - \frac{1}{2} e_2,& e_3\cdot e_1=- \frac{1}{2} e_3, \\[2mm] e_3\cdot e_3=\frac{1}{2} e_2,& 
e_4 \cdot e_1=-\frac{1}{2} e_4, &  e_4\cdot e_4=-\frac{1}{2}e_2.
\end{array}
 \]
A Balinsky-Novikov structure, depending on parameters $\lambda, \gamma \in \mathbb R$,  is given by 
 \[
 \begin{array}{llll}
e_1 \cdot e_1= \gamma e_1+\lambda e_2, & e_1\cdot e_2= (1+\gamma) e_2, & e_1\cdot e_3=\frac{1}{2}(1+\gamma )e_3, & e_1\cdot e_4= \frac{1}{2}(1+\gamma)e_4,\\[2mm]
e_2\cdot e_1=\gamma e_2, & e_3\cdot e_1=\gamma e_3, & e_3\cdot e_3= e_2, & e_4\cdot e_1=\gamma e_4,\\[2mm]
e_4\cdot e_4=- e_2.
 \end{array}
 \]
  \item \underline{The Lie superalgebra $(D^{7}_{1/2 \; 1/2})^3:$} A Novikov structure, depending on a parameter $\lambda \in \mathbb R$, is given by
 \[
 \begin{array}{llll}
e_1\cdot e_1=-\frac{1}{2} e_1 +\lambda e_2, &  e_1\cdot e_2=\frac{1}{2} e_2, &  e_2\cdot e_1= - \frac{1}{2} e_2, &  e_3\cdot e_1=- \frac{1}{2} e_3, \\[2mm] e_3\cdot e_3=\frac{1}{2} e_2,& 
e_4 \cdot e_1=-\frac{1}{2} e_4.
\end{array}
 \]
A Balinsky-Novikov structure, depending on parameters $\lambda, \gamma \in \mathbb R$, is given by
 \[
 \begin{array}{llll}
e_1 \cdot e_1= \gamma e_1+\lambda e_2, & e_1\cdot e_2= (1+\gamma) e_2, & e_1\cdot e_3=\frac{1}{2}(1+\gamma )e_3, & e_1\cdot e_4= \frac{1}{2}(1+\gamma)e_4,\\[2mm]
e_2\cdot e_1=\gamma e_2, & e_3\cdot e_1=\gamma e_3, & e_3\cdot e_3= e_2, & e_4\cdot e_1=\gamma e_4.
 \end{array}
 \]
  \item \underline{The Lie superalgebra $(D^{7}_{1-p,p})$ with $p\leq \frac{1}{2}:$} A Novikov structure, depending on a parameter $\lambda \in \mathbb R$, is given by 
 \[
 \begin{array}{llll}
e_1\cdot e_1=-p e_1 +\lambda e_2, &  e_1\cdot e_2=(1-p) e_2, &  e_1\cdot e_4= (1-2p) e_4, & e_2\cdot e_1=- p e_2, \\[2mm] 
e_3\cdot e_1=-p e_3,& e_3 \cdot e_4=e_2, & e_4\cdot e_1=-p e_4.
\end{array}
 \]
A Balinsky-Novikov structure, depending on parameters $\lambda, \gamma \in \mathbb R$, is given by
 \[
 \begin{array}{llll}
e_1 \cdot e_1= (\gamma-1) e_1+\lambda e_2, & e_1\cdot e_2= \gamma e_2, & e_1\cdot e_3=\frac{1}{2}(2p+\gamma-1 )e_3, & e_1\cdot e_4= \frac{1}{2}(1-2p+\gamma)e_4,\\[2mm]
e_3\cdot e_1=(\gamma -1) e_3, & e_3\cdot e_4= e_2, & e_2\cdot e_1=(\gamma-1)e_2, &  e_4\cdot e_1=(\gamma-1) e_4\\[2mm]
 e_4\cdot e_3=e_2.
 \end{array}
 \]
   \item \underline{The Lie superalgebra $(D^{8}_{1/2}):$} A Novikov structure, depending on a parameter $\lambda\in \mathbb R$, is given by 
 \[
 \begin{array}{llll}
e_1\cdot e_1=-\frac{1}{2} e_1 +\lambda e_2, &  e_1\cdot e_2=\frac{1}{2} e_2, &  e_1\cdot e_4= e_3, & e_2\cdot e_1=- \frac{1}{2} e_2,\\[2mm]
 e_3\cdot e_1=- \frac{1}{2} e_3,& e_4 \cdot e_1= - \frac{1}{2}e_4, & e_4\cdot e_4=\frac{1}{2} e_2.
\end{array}
 \]
A Balinsky-Novikov structure, depending on parameters $\lambda, \gamma \in \mathbb R$, is given by
 \[
 \begin{array}{llll}
e_1 \cdot e_1= \gamma e_1+\lambda e_2, & e_1\cdot e_2= (1+\gamma) e_2, & e_1\cdot e_3=\frac{1}{2}(1+\gamma )e_3, & e_1\cdot e_4= e_3+\frac{1}{2}(1+\gamma)e_4,\\[2mm]
e_2\cdot e_1=\gamma e_2, & e_3\cdot e_1= \gamma e_3, & e_4\cdot e_1=\gamma e_4, &  e_4\cdot e_4= e_2.\\[2mm]
 \end{array}
 \]
    \item \underline{The Lie superalgebra $(D^{9}_{1/2,p})$ with $p>0:$} A Novikov structure, depending on a parameter  $\lambda\in \mathbb R$, is given by 
 \[
 \begin{array}{lllll}
e_1\cdot e_1=-\frac{1}{2} e_1 +\lambda e_2, &  e_1\cdot e_2=\frac{1}{2} e_2, &  e_1\cdot e_3= -p e_4, & e_1\cdot e_4= p e_3, & e_2\cdot e_1=- \frac{1}{2} e_2,\\[2mm]
e_3 \cdot e_1= - \frac{1}{2}e_3, &  e_3\cdot e_3=\frac{1}{2} e_2, & e_4\cdot e_1=-\frac{1}{2} e_4, & e_4\cdot e_4=\frac{1}{2} e_2.
\end{array}
 \]
A Balinsky-Novikov structure, depending on parameters $\lambda, \gamma \in \mathbb R$, is given by
 \[
 \begin{array}{llll}
e_1 \cdot e_1= (\gamma-1) e_1+\lambda e_2, & e_1\cdot e_2= \gamma e_2, & e_1\cdot e_3=\frac{\gamma}{2} e_3-pe_4, & e_1\cdot e_4= p e_3+\frac{\gamma}{2}e_4,\\[2mm]
e_2\cdot e_1=(\gamma-1) e_2, & e_3\cdot e_1= (\gamma-1) e_3, & e_3\cdot e_3= e_2, &  e_4\cdot e_1= (\gamma-1)e_4,\\[2mm]
e_4\cdot e_4=e_2.
 \end{array}
 \]
    \item \underline{The Lie superalgebra $(D^{10}_{0})^1:$} An  LSSA structure, depending on a parameter $\gamma \in \mathbb R$, is given by 
 \[
 \begin{array}{lllll}
e_1\cdot e_1=- e_1, & e_1\cdot e_4=- e_4, & e_2\cdot e_1= - e_2, & e_2\cdot e_4= (1+\gamma) e_3, & e_3\cdot e_1=- e_3,\\[2mm]
e_3 \cdot e_4= - \frac{1}{2}e_2, & e_4\cdot e_1=- e_4, & e_4\cdot e_2=\gamma e_3, & e_4\cdot e_4=\frac{1}{2} e_1.
\end{array}
 \]
{\bf Claim.} There is no Novikov structure on $(D^{10}_{0})^1$ compatible with the Lie structure. 
\begin{proof}
Let us write: 
\[
\begin{array}{lll}
e_i\cdot e_j=\sum_{i,j} \lambda^1_{ij}e_1+\sum_{i,j} \lambda^2_{ij}e_2 & \text{ if } & |e_i|+|e_j|={\bar 0}, \text{ and }\\[2mm]
e_i\cdot e_j=\sum_{i,j} \lambda^3_{ij}e_3+\sum_{i,j} \lambda^4_{ij}e_4 & \text{ if } & |e_i|+|e_j|={\bar 1}.
\end{array}
\]
The compatibility condition implies that
\[
\begin{array}{l}
\lambda^3_{13}=\lambda_{31}^3+1, \quad \lambda^4_{13}=\lambda_{31}^4, \quad \lambda^2_{12}=\lambda_{21}^2+1, \quad \lambda^1_{21}=\lambda_{12}^1, \quad 
\lambda^3_{41}=\lambda_{14}^3, \quad \lambda^4_{41}=\lambda_{14}^4,\\[3mm]
\lambda^3_{24}=\lambda_{42}^3+1, \quad \lambda^4_{42}=\lambda_{24}^4, \quad
\lambda^3_{32}=\lambda_{23}^3, \quad \lambda^4_{32}=\lambda_{23}^4, \quad 
\lambda^1_{33}=0, \quad \lambda^2_{33}=0,\quad 
\lambda^1_{44}=\frac{1}{2}, \\[3mm]
\lambda^2_{44}=0 \quad \lambda^2_{34}=-\lambda^2_{43}- \frac{1}{2}, \quad \lambda^1_{43}=-\lambda^1_{34}.
\end{array}
\]
Let us put
\[
\begin{array}{lcl}
N(x,y,z): & = & (z\cdot x) \cdot y - (-1)^{|x||y|} (z\cdot y) \cdot x,\\[3mm]
T(x,y,z) : & = & (x,y,z)- (-1)^{|x||y|} (y,x,z).
\end{array}
\]
A direct calculation gives
\[
N(e_4, e_4, e_4)=0 \implies \lambda^3_{14}=\lambda_{14}^4=0, \text{ also } N(e_4, e_1, e_4)=0 \implies \lambda^1_{11}=\lambda_{11}^2=0.
\]
Similarly, 
\[
N(e_1, e_3, e_1)=0 \implies \lambda^3_{31}=\lambda_{13}^4=0, \text{ also } T(e_2, e_1, e_2)=0 \implies \lambda^2_{21}=\lambda_{12}^1=\lambda^1_{22}=\lambda_{22}^2=0.
\]
Additionally, 
\[
N(e_4, e_2, e_1)=0 \implies \lambda^4_{24}=\lambda_{42}^3=-1 \text{ also } N(e_4, e_2, e_4)=0 \implies \lambda^1_{34}=\lambda_{43}^2=0.
\]
Now, $N(e_3, e_4, e_4)=\frac{1}{2}e_3$ which is a not zero. Therefore, it cannot be Novikov.   
\end{proof}
However, there is a Balinsky-Novikov structure, depending on parameters $\lambda, \gamma \in \mathbb R$, is given by
 \[
 \begin{array}{llll}
e_1 \cdot e_1= \gamma e_1+\lambda e_2, & e_1\cdot e_2= (1+\gamma) e_2, & e_1\cdot e_3=\frac{1}{2} (2+\gamma)e_3, & e_1\cdot e_4= -\lambda e_3+\frac{\gamma}{2}e_4,\\[2mm]
e_2\cdot e_1=\gamma e_2, & e_2\cdot e_4= -\gamma e_3, & e_3\cdot e_1= \gamma e_3, &  e_3\cdot e_4= -\frac{1}{2} e_2,\\[2mm]
e_4\cdot e_1=-2 \lambda e_3+\gamma e_4, & e_4\cdot e_2=-2(1+\gamma)e_3& e_4\cdot e_3=-\frac{1}{2}e_2, & e_4\cdot e_4=e_1.
 \end{array}
 \]
    \item \underline{The Lie superalgebra $(D^{10}_{0})^2:$} An LSSA structure, depending on parameters $\lambda, \gamma \in \mathbb R$, is given by
    \[
\begin{array}{lllll}
e_1\cdot e_1=- e_1 -2 \lambda \gamma e_2, &  e_1\cdot e_4= \gamma e_3-e_4, & e_2\cdot e_1= - e_2, & e_2\cdot e_4= e_3, & e_3\cdot e_1=- e_3,\\[2mm]
e_3 \cdot e_4= \frac{1}{2} (1-2 \lambda)e_2, & e_4\cdot e_1= \gamma e_3- e_4, & e_4\cdot e_3=\lambda e_2, & e_4\cdot e_4=-\frac{1}{2} e_1.
\end{array}
 \]
{\bf Claim.} There is no Novikov structure on $(D^{10}_{0})^2$ compatible with the Lie structure. 

However, there is a Balinsky-Novikov structure, depending on parameters $\lambda, \gamma \in \mathbb R$, is given by given by 
 \[
 \begin{array}{lll}
e_1 \cdot e_1= (\gamma-1) e_1+\lambda e_2, & e_1\cdot e_2= \gamma e_2, & e_1\cdot e_3=\frac{1}{2} (1+\gamma)e_3, \\[2mm] e_1\cdot e_4= -\lambda e_3+\frac{1}{2}(\gamma-1)e_4,&
e_2\cdot e_1=(\gamma-1) e_2, & e_2\cdot e_4= (1-\gamma) e_3, \\[2mm]
e_3\cdot e_1= (\gamma-1) e_3, &  e_3\cdot e_4= \frac{1}{2} e_2,&
e_4\cdot e_1=-2 \lambda e_3+(\gamma-1)e_4, \\[2mm] 
e_4\cdot e_2=-2 \gamma e_3& e_4\cdot e_3=\frac{1}{2}e_2, & e_4\cdot e_4=-e_1.
 \end{array}
 \]
   \item \underline{The Lie superalgebra $(2A_{1,1}+2A)^1:$} A Novikov structure, depending on parameters $\lambda, \gamma \in \mathbb R$, is given by
    \[
\begin{array}{l}
e_3\cdot e_3= \frac{1}{2}e_1 , \quad e_3\cdot e_4= \lambda e_1 +\gamma e_2, \quad e_4\cdot e_3= - \lambda e_1 -\gamma e_2 ,  \quad e_4\cdot e_4= \frac{1}{2} e_2.
\end{array}
 \]
A Balinsky-Novikov structure, depending on parameters $\lambda, \gamma \in \mathbb R$, is given by 
 \[
 \begin{array}{llll}
e_1 \cdot e_1= \gamma e_1, & e_1\cdot e_3= \frac{\gamma}{2} e_3, & e_2\cdot e_2=2 \lambda e_2, & e_2\cdot e_4= \lambda e_4,\\[2mm]
e_3\cdot e_1=\gamma e_3, & e_3\cdot e_3=  e_1, & e_4\cdot e_2= 2 \lambda  e_4, &  e_4\cdot e_4= e_2.
 \end{array}
 \]
  \item \underline{The Lie superalgebra $(2A_{1,1}+2A)^2:$} A Novikov structure, depending on a parameter $\gamma \in \mathbb R$, is given by 
    \[
\begin{array}{l}
e_3\cdot e_3= \frac{1}{2}e_1 , \quad e_3\cdot e_4= \gamma e_1, \quad e_4\cdot e_3= (1- \gamma) e_1 ,  \quad e_4\cdot e_4= \frac{1}{2} e_2.
\end{array}
 \]
 A Balinsky-Novikov structure, depending on parameters $\lambda, \gamma \in \mathbb R$, is given by
 \[
 \begin{array}{llll}
e_1 \cdot e_1= 2\gamma e_1, & e_1\cdot e_2= 2 \gamma e_1, & e_1\cdot e_3= \gamma e_3, &e_1\cdot e_4=\gamma e_3, \\[2mm]
e_2\cdot e_1= 2 \gamma e_1,& 
e_2\cdot e_2=-2(\lambda-\gamma) e_1+2 \lambda e_2, & e_2\cdot e_3=  \gamma e_3, & 
e_2\cdot e_4= (\gamma - \lambda)  e_3+\lambda e_4, \\[2mm]  e_3\cdot e_1= 2 \gamma e_3,& 
e_3 \cdot e_2=2 \gamma e_3, & e_3 \cdot e_3=e_1, & 
e_3 \cdot e_4=e_1, \\[2mm]
e_4\cdot e_1=2 \gamma e_3, & e_4\cdot e_2=-2(\lambda- \gamma) e_3+ 2 \lambda e_4,& e_4\cdot e_3=e_1, & e_4 \cdot e_4=e_2.
 \end{array}
 \]
   \item \underline{The Lie superalgebra $(2A_{1,1}+2A)^3_p:$} A Novikov structure, depending on a parameter $\gamma \in \mathbb R$, is given by
    \[
\begin{array}{l}
e_3\cdot e_3= \frac{1}{2}e_1 , \quad e_3\cdot e_4= \gamma e_1, \quad e_4\cdot e_3= (p- \gamma) e_1 + p e_2,  \quad e_4\cdot e_4= \frac{1}{2} e_2.
\end{array}
\]
A Balinsky-Novikov structure, depending on parameters $\lambda, \gamma \in \mathbb R$, is given by
 \[
 \begin{array}{lll}
e_1 \cdot e_1= \frac{2}{p} (p^2 \gamma + \lambda(2 p^2-1)) e_1 + 2p \lambda e_2, & e_1\cdot e_2= -2 p\lambda e_1+2 p \gamma e_2, \\[2mm] 
e_1\cdot e_3= \frac{1}{p}(p^2\gamma +\lambda(p^2-1)) e_3+ \lambda e_4, &  
e_1\cdot e_4=-\lambda e_3+p( \gamma+\lambda)e_4,\\[2mm] 
e_2\cdot e_1= -2p \lambda e_1+2p \gamma e_2,& 
e_2\cdot e_2=-2 p \gamma e_1- \frac{2}{p}(p^2 \lambda+(2p^2-1)\gamma) e_2, 
\\[2mm] 
e_2\cdot e_3=  -p( \lambda +\gamma)  e_3 + \gamma e_4, & 
e_2\cdot e_4= -\gamma  e_3 + \frac{1}{p}((1-p^2)\gamma-p^2\lambda) e_4, \\[2mm]  e_3\cdot e_1= \frac{2}{p}(p^2\gamma +(p^2-1)\lambda) e_3 +2 \lambda e_4, & 
e_3 \cdot e_2=-2p ( \gamma + \lambda) e_3 + 2 \gamma e_4, \\[2mm] e_3 \cdot e_3=e_1, &
e_3 \cdot e_4=p e_1+p e_2, \\[2mm]
e_4\cdot e_1=-2 \lambda e_3+ 2p(\gamma + \lambda)e_4, & 
e_4\cdot e_2=-2 \gamma e_3  - \frac{2}{p}((p^2-1)\gamma+p^2\lambda)) e_4, \\[2mm] e_4\cdot e_3=p e_1 + p e_2, & e_4 \cdot e_4=e_2.
 \end{array}
 \]
   \item \underline{The Lie superalgebra $(2A_{1,1}+2A)^4_p:$} A Novikov structure, depending on a parameter $\gamma \in \mathbb R$, is given by
    \[
\begin{array}{l}
e_3\cdot e_3= \frac{1}{2}e_1 , \quad e_3\cdot e_4= \gamma e_1, \quad e_4\cdot e_3=(p-\gamma) e_1-pe_2, \quad e_4\cdot e_4= \frac{1}{2} e_2.
\end{array}
 \]
A Balinsky-Novikov structure, depending on parameters $\lambda, \gamma \in \mathbb R$, is given by
 \[
 \begin{array}{lll}
e_1 \cdot e_1= -\frac{2}{p} (p^2 \gamma - \lambda(2 p^2+1)) e_1 - 2p \lambda e_2, & e_1\cdot e_2= 2 p\lambda e_1-2 p \gamma e_2, \\[2mm] 
e_1\cdot e_3= \frac{1}{p}(-p^2\gamma +\lambda(p^2+1)) e_3+ \lambda e_4, &  
e_1\cdot e_4=\lambda e_3+p( -\gamma+\lambda)e_4,\\[2mm] 
e_2\cdot e_1= 2p \lambda e_1-2p \gamma e_2,& 
e_2\cdot e_2= 2 p \gamma e_1- \frac{2}{p}(-p^2 \lambda+(2p^2+1)\gamma) e_2, 
\\[2mm] 
e_2\cdot e_3=  p( -\gamma +\lambda)  e_3 + \gamma e_4, & 
e_2\cdot e_4= \gamma  e_3 + \frac{1}{p}((-1-p^2)\gamma+p^2\lambda) e_4, \\[2mm]  e_3\cdot e_1= \frac{2}{p}(-p^2\gamma +(p^2+1)\lambda) e_3 +2 \lambda e_4, & 
e_3 \cdot e_2=-2p ( \gamma - \lambda) e_3 + 2 \gamma e_4, \\[2mm] e_3 \cdot e_3=e_1, &
e_3 \cdot e_4=p e_1-p e_2, \\[2mm]
e_4\cdot e_1=2 \lambda e_3- 2p(\gamma - \lambda)e_4, & 
e_4\cdot e_2=2 \gamma e_3  - \frac{2}{p}((p^2+1)\gamma-p^2\lambda)) e_4, \\[2mm] e_4\cdot e_3=p e_1 - p e_2, & e_4 \cdot e_4=e_2.
 \end{array}
 \]
    \item \underline{The Lie superalgebra $(C_1^1+A):$} A Novikov structure, depending on parameters $\lambda, \gamma \in \mathbb R$, is given by
    \[
\begin{array}{llll}
e_1\cdot e_1= \lambda e_1 +\gamma e_2 , & e_1\cdot e_2= (1+\lambda) e_2, & e_1\cdot e_3= (1+\lambda) e_3, & e_1\cdot e_4=\lambda  e_4,\\[2mm] e_2\cdot e_1=\lambda e_2, & 
e_3\cdot e_1= \lambda e_3, & e_3 \cdot e_4=-\lambda e_2, & e_4\cdot e_1=\lambda e_4,\\[2mm]  
e_4 \cdot e_3=(1+\lambda) e_2.
\end{array}
 \]
A Balinsky-Novikov structure, depending on parameters $\lambda, \gamma \in \mathbb R$, is given by
 \[
 \begin{array}{llll}
e_1 \cdot e_1= (\gamma -1) e_1 + \lambda e_2, & e_1 \cdot e_2= \gamma e_2, & 
e_1 \cdot e_3=\frac{1}{2}(1+\gamma) e_3,&
e_1 \cdot e_4=\frac{1}{2}(\gamma-1)e_4,\\[2mm]
e_2\cdot e_1= (\gamma-1) e_2, & e_3\cdot e_1=(\gamma-1)e_3, & e_3\cdot e_4= e_2, & e_4\cdot e_1=(\gamma-1)e_4, \\[2mm] e_4\cdot e_3=e_2.
 \end{array}
 \]
   \item \underline{The Lie superalgebra $(C_{1/2}^1+A):$} A Novikov structure is given by 
    \[
\begin{array}{l}
e_1\cdot e_1= -\frac{1}{2} e_1 +\gamma e_2 , \quad e_1\cdot e_2= \frac{1}{2} e_2, \quad e_2\cdot e_1= -\frac{1}{2} e_2, \quad e_3\cdot e_1=-\frac{1}{2} e_3,\quad  e_3\cdot e_3=\frac{1}{2}e_2.
\end{array}
 \]
A Balinsky-Novikov structure, depending on parameters $\lambda, \gamma \in \mathbb R$, is given by
 \[
 \begin{array}{llll}
e_1 \cdot e_1= \gamma e_1 + \lambda e_2, & e_1 \cdot e_2= (1+\gamma) e_2, & 
e_1 \cdot e_3=\frac{1}{2}(1+\gamma) e_3,&
e_1 \cdot e_4=\frac{\gamma}{2}e_4,\\[2mm]
e_2\cdot e_1= \gamma e_2, & e_3\cdot e_1=\gamma e_3, & e_3\cdot e_3= e_2, & e_4\cdot e_1=\gamma e_4.
 \end{array}
 \]
  \item \underline{The Lie superalgebra $(C_{-1}^2+A):$} A Novikov structure, depending on parameters $\lambda, \gamma \in \mathbb R$, is given by
    \[
\begin{array}{l}
e_1\cdot e_1= (2\lambda-1) e_1 +\gamma e_2 , \quad e_1\cdot e_2= (2\lambda-1) e_2, \quad e_1\cdot e_3= 2 \lambda  e_3, \quad e_1\cdot e_4=2(\lambda-1) e_4,\\[2mm]
e_2\cdot e_1=(2\lambda -1)e_2, \quad e_3\cdot e_1=(2 \lambda -1) e_3, \quad e_3 \cdot e_4=(1-\lambda) e_2, \quad e_4\cdot e_1=(2\lambda -1) e_4, \\[2mm] 
e_4\cdot e_3=\lambda e_2.
\end{array}
 \]
A Balinsky-Novikov structure, depending on parameters $\lambda, \gamma \in \mathbb R$, is given by
 \[
 \begin{array}{llll}
e_1 \cdot e_1= \gamma e_1 + \lambda e_2, & e_1 \cdot e_2= \gamma e_2, & 
e_1 \cdot e_3=\frac{1}{2}(2+\gamma) e_3,&
e_1 \cdot e_4=\frac{1}{2} (\gamma -2)e_4,\\[2mm]
e_2\cdot e_1= \gamma e_2, & e_3\cdot e_1=\gamma e_3, & e_3\cdot e_4= e_2, & e_4\cdot e_1=\gamma e_4,\\[2mm]
e_4\cdot e_3=e_2.
 \end{array}
 \]
 \item \underline{The Lie superalgebra $(C^3+A):$} A Novikov structure is given by
    \[
    \begin{array}{c}
e_4 \cdot e_1=- e_3, \quad e_4\cdot e_4=\frac{1}{2}e_2.
\end{array}
    \]
A Balinsky-Novikov structure, depending on parameters $\lambda, \gamma \in \mathbb R$, is given by
 \[
 \begin{array}{llll}
e_1 \cdot e_1= \gamma e_1 + \lambda e_2, & e_1 \cdot e_2= \gamma e_2, & 
e_1 \cdot e_3=\frac{\gamma}{2}e_3,&
e_1 \cdot e_4=e_3+\frac{\gamma}{2} e_4,\\[2mm]
e_2\cdot e_1= \gamma e_2, & e_3\cdot e_1=\gamma e_3, & e_4\cdot e_1=\gamma e_4, & e_4\cdot e_4=e_2.
 \end{array}
 \]
      \item \underline{The Lie superalgebra $(C_{0}^5+A):$} A  Novikov structure, depending on parameters $\lambda, \gamma \in \mathbb R$, is given by
    \[
\begin{array}{l}
e_1\cdot e_1= 2\lambda e_1 +\gamma e_2 , \quad e_1\cdot e_2= 2\lambda e_2, \quad e_1\cdot e_3= 2 \lambda  e_3 -e_4, \quad e_1\cdot e_4=e_3+ 2\lambda e_4,\\[2mm]
e_2\cdot e_1=2\lambda e_2, \quad e_3\cdot e_1=2 \lambda e_3, \quad e_3 \cdot e_3=\frac{1}{2} e_2, \quad e_3\cdot e_4=-\lambda e_2, \quad e_4\cdot e_1=2 \lambda e_4, \\[2mm] 
e_4\cdot e_3=\lambda e_2, \quad e_4\cdot e_4=\frac{1}{2}e_2.
\end{array}
 \]
A Balinsky-Novikov structure, depending on parameters $\lambda, \gamma \in \mathbb R$, is given by
 \[
 \begin{array}{llll}
e_1 \cdot e_1= \gamma e_1 + \lambda e_2, & e_1 \cdot e_2= \gamma e_2, & 
e_1 \cdot e_3=\frac{\gamma}{2}e_3-e_4,&
e_1 \cdot e_4=e_3+\frac{\gamma}{2} e_4,\\[2mm]
e_2\cdot e_1= \gamma e_2, & e_3\cdot e_1=\gamma e_3, & e_3\cdot e_3= e_2, & e_4\cdot e_1=\gamma e_4,\\[2mm]
e_4\cdot e_4=e_2.
 \end{array}
 \]
    \item \underline{The Lie superalgebra $D^1$:} A Novikov structure, depending on parameters $\lambda,\mu,  \gamma \in \mathbb R$, is given by
    \[
\begin{array}{l}
e_2\cdot e_2= \lambda e_1 +\gamma e_3 , \quad e_2\cdot e_3= (1+\mu) e_1, \quad e_2\cdot e_4=e_4, \quad e_3\cdot e_2= \mu e_1.
\end{array}
 \]
A Balinsky-Novikov structure, depending on parameters $\lambda, \gamma \in \mathbb R$, is given by
 \[
 \begin{array}{llll}
e_2 \cdot e_2= \gamma e_1 + \lambda e_3, & e_2 \cdot e_4=  e_4, & 
e_3 \cdot e_2=-e_1.
 \end{array}
 \]
    \item \underline{The Lie superalgebra $D^2_q$ with $q\not = 0$:} A Novikov structure is given by  
    \[
\begin{array}{l}
e_1\cdot e_3= -q e_1 , \quad e_2\cdot e_3= -q  e_2, \quad e_3\cdot e_1=-(1+q)e_1, \quad e_3\cdot e_2= -e_1- (q+1)e_2, \\[2mm]
e_3\cdot e_3=-q e_3, \quad e_4\cdot e_3=-q e_4.
\end{array}
 \]
A Balinsky-Novikov structure, depending on parameters $\lambda, \gamma \in \mathbb R$, is given by
 \[
 \begin{array}{llll}
e_3 \cdot e_1= - e_1, & e_3 \cdot e_2=  -e_1-e_2, & 
e_3 \cdot e_3= \lambda e_1+\gamma e_2,& e_3\cdot e_4=q e_4.
 \end{array}
 \]
   \item \underline{The Lie superalgebra $D^3_{pq}$ with $pq\not = 0$:} A  Novikov structure, depending on parameters $\lambda, \mu,  \gamma \in \mathbb R$, is given by
    \[
\begin{array}{l}
e_1\cdot e_3= (p+\gamma) e_1 , \quad e_2\cdot e_3= (p+\gamma)  e_2, \quad e_3\cdot e_1= \gamma e_1+e_2, \quad e_3\cdot e_2= -e_1+ \gamma e_2, \\[2mm]
e_3\cdot e_3= \lambda e_1 +\mu e_2 + (p+\gamma)e_3, \quad e_3\cdot e_4=(p+q+\gamma) e_4, \quad e_4 \cdot e_3=(p+\gamma)e_4.
\end{array}
 \]
A Balinsky-Novikov structure, depending on parameters $\lambda, \gamma \in \mathbb R$, is given by
 \[
 \begin{array}{llll}
e_3 \cdot e_1= - p e_1 + e_2, & e_3 \cdot e_2=  -e_1- p e_2, & 
e_3 \cdot e_3= \lambda e_1+\gamma e_2,& e_3\cdot e_4=q e_4.
 \end{array}
 \]
  \item \underline{The Lie superalgebra $D^{11}_{pq}$ with $0<|p|\leq |q|\leq 1$:} A Novikov structure is given by 
    \[
\begin{array}{l}
e_1\cdot e_2=  e_2 , \quad e_1\cdot e_3= p  e_3, \quad e_1\cdot e_4= q e_4.
\end{array}
 \]
A Balinsky-Novikov structure is given by
 \[
 \begin{array}{llll}
e_1 \cdot e_2=  e_2, & e_1 \cdot e_3=p e_3, & 
e_1 \cdot e_4= q e_4. 
 \end{array}
 \]
    \item \underline{The Lie superalgebra $D^{12}$:} A Novikov structure, depending on  a parameter $ \gamma \in \mathbb R$, is given by
    \[
\begin{array}{l}
e_1\cdot e_2=e_2, \quad e_1\cdot e_4= (1+\gamma) e_3 , \quad e_4\cdot e_1= \gamma  e_3. 
\end{array}
 \]
A Balinsky-Novikov structure is given by
 \[
 \begin{array}{llll}
e_1 \cdot e_1=  -2e_1, & e_1 \cdot e_3
=- e_3, & 
e_1 \cdot e_4= e_3- e_4,& e_2\cdot e_1=-2e_2,\\[2mm]
e_3 \cdot e_1=-2 e_3, & e_4\cdot e_1=-2e_4.
 \end{array}
 \]
  \item \underline{The Lie superalgebra $D^{13}_p$ with  $p\not =0$:} A Novikov structure is given by
    \[
\begin{array}{l}
e_1\cdot e_2= p e_2 , \quad e_1\cdot e_3=  e_3, \quad e_1\cdot e_4=e_3+e_4. 
\end{array}
 \]
A Balinsky-Novikov structure, depending on a parameter  $\gamma \in \mathbb R$, is given by
 \[
 \begin{array}{llll}
e_1 \cdot e_1=  2(\gamma -1) e_1, & e_1 \cdot e_2
= (p-1+\gamma)e_2, & 
e_1 \cdot e_3= \gamma e_3,& e_1\cdot e_4=e_3+ \gamma e_4,\\[2mm]
e_2 \cdot e_1=2(\gamma-1) e_2, & e_3\cdot e_1=2 (\gamma -1) e_3,& e_4\cdot e_1=2(\gamma-1)e_4.
 \end{array}
 \]
  \item \underline{The Lie superalgebra $D^{14}_{pq}$ with $p\not =0, q\geq 0$:} A Novikov structure is given by 
    \[
\begin{array}{l}
e_1\cdot e_2= p e_2 , \quad e_1\cdot e_3=  qe_3-e_4, \quad e_1\cdot e_4=e_3+qe_4. 
\end{array}
 \]
A Balinsky-Novikov structure is given by
 \[
 \begin{array}{llll}
e_1 \cdot e_1=  -2 q e_1, & e_1 \cdot e_2
= (p-q)e_2, & 
e_1 \cdot e_3= - e_4,& e_1\cdot e_4=e_3,\\[2mm]
e_2 \cdot e_1=-2q e_2, & e_3\cdot e_1=- 2 q e_3,& e_4\cdot e_1=-2 q e_4.
 \end{array}
 \]
  \item \underline{The Lie superalgebra $D^{15}$:} A Novikov structure, depending on a parameter $\gamma \in \mathbb R$, is given by
    \[
\begin{array}{l}
e_1\cdot e_3=e_2, \quad e_1\cdot e_4= \gamma e_2 + e_3 , \quad e_4\cdot e_1= \gamma e_2.
\end{array}
 \]
A Balinsky-Novikov structure, depending on a parameter $\gamma \in \mathbb R$, is given by
 \[
 \begin{array}{llll}
e_1 \cdot e_1=  2 \gamma  e_1, & e_1 \cdot e_2
= \gamma e_2, & 
e_1 \cdot e_3= e_2+ \gamma e_3,& e_1\cdot e_4=e_3 + \gamma e_4,\\[2mm]
e_2 \cdot e_1= 2 \gamma  e_2, & e_3\cdot e_1= 2 \gamma  e_3,& e_4\cdot e_1=2 \gamma  e_4.
 \end{array}
 \]
  \item \underline{The Lie superalgebra $D^{16}$:} A Novikov structure is given by 
    \[
\begin{array}{l}
e_1\cdot e_2= e_2, \quad e_1\cdot e_3= e_2+e_3, \quad e_1\cdot e_4=e_3+e_4.
\end{array}
 \]
A Balinsky-Novikov structure, depending on a parameter $\gamma \in \mathbb R$, is given by
 \[
 \begin{array}{llll}
e_1 \cdot e_1=  2 (\gamma-1)  e_1, & e_1 \cdot e_2
= \gamma e_2, & 
e_1 \cdot e_3= e_2+ \gamma e_3,& e_1\cdot e_4=e_3 + \gamma e_4,\\[2mm]
e_2 \cdot e_1= 2 (\gamma-1)  e_2, & e_3\cdot e_1= 2 (\gamma-1)  e_3,& e_4\cdot e_1=2 (\gamma-1)  e_4.
 \end{array}
 \]
   \item \underline{The Lie superalgebra $(A_{3,1}+A)$:} A Novikov structure, depending on parameters $\lambda, \mu, \gamma \in \mathbb R$, is given by  
    \[
\begin{array}{l}
e_2\cdot e_2= \gamma e_1, \quad e_2\cdot e_3= (1+\lambda) e_1, \quad e_3\cdot e_2= \lambda e_1, \quad e_3\cdot e_3=\mu e_1, \quad e_4\cdot e_4=\frac{1}{2} e_1.
\end{array}
 \]
A Balinsky-Novikov structure, depending on parameters $\lambda, \mu, \gamma \in \mathbb R$, is given by 
 \[
 \begin{array}{lllll}
e_2 \cdot e_2=  \gamma  e_1, & e_2 \cdot e_3
= \lambda e_1, & 
e_3 \cdot e_2= (\lambda -1)e_1,& e_3\cdot e_3=\mu e_1,& e_4\cdot e_4=e_1.
 \end{array}
 \]
  \item \underline{The Lie superalgebra $(D^3_{p,-1/2})$ with $p\not =0$:} A  Novikov structure, depending on parameters $\lambda, \gamma \in \mathbb R$, is given by 
    \[
\begin{array}{l}
e_1\cdot e_1=-\frac{1}{2}e_1+ \lambda e_2+\gamma e_3, \quad e_1\cdot e_2= \frac{1}{2} e_2, \quad e_1\cdot e_3= \frac{1}{2}(2p-1) e_3, \quad e_2\cdot e_1=-\frac{1}{2} e_2,\\[2mm]  
e_3\cdot e_1=-\frac{1}{2} e_3, \quad e_4\cdot e_1=-\frac{1}{2}e_4, \quad e_4\cdot e_4=\frac{1}{2}e_2.
\end{array}
\]
A Balinsky-Novikov structure, depending on parameters $\lambda, \gamma \in \mathbb R$, is given by 
\[
 \begin{array}{lllll}
e_1 \cdot e_1=  \gamma  e_2 + \lambda e_3, & e_1 \cdot e_2
= e_2, & 
e_1 \cdot e_3= p e_3, & e_1\cdot e_4=\frac{1}{2} e_4, & e_4\cdot e_4=e_2.
 \end{array}
 \]
  \item \underline{The Lie superalgebra 
  $(D^2_{-1/2})^1$:} A Novikov structure, depending on parameters $\lambda, \gamma \in \mathbb R$, is given by 
    \[
\begin{array}{l}
e_1\cdot e_1=-\frac{1}{2}e_1+ \lambda e_2+\gamma e_3, \quad e_1\cdot e_2= \frac{1}{2} e_2, \quad e_1\cdot e_3= e_2+ \frac{1}{2} e_3, \quad e_2\cdot e_1=-\frac{1}{2} e_2, \\[2mm] 
e_3\cdot e_1=-\frac{1}{2} e_3, \quad 
e_4\cdot e_1= -\frac{1}{2} e_4,\quad e_4\cdot e_4=\frac{1}{2}e_2.
\end{array}
 \]
A Balinsky-Novikov structure, depending on parameters $\lambda, \gamma \in \mathbb R$, is given by 
 \[
 \begin{array}{lllll}
e_1 \cdot e_1=  \gamma  e_2+\lambda e_3, & e_1 \cdot e_2
= e_2, & 
e_1 \cdot e_3= e_2+e_3,& e_1\cdot e_4=\frac{1}{2} e_4,& e_4\cdot e_4=e_2.
 \end{array}
 \]

  \item \underline{The Lie superalgebra 
  $(D^2_{-1/2})^2$:} A Novikov structure, depending on parameters $\lambda, \gamma \in \mathbb R$, is given by
    \[
\begin{array}{l}
e_1\cdot e_1=-\frac{1}{2}e_1+ \lambda e_2+\gamma e_3, \quad e_1\cdot e_2= \frac{1}{2} e_2, \quad e_1\cdot e_3= -e_2+ \frac{1}{2} e_3, \quad e_2\cdot e_1=-\frac{1}{2} e_2, \\[2mm] 
e_3\cdot e_1=-\frac{1}{2} e_3, \quad 
e_4\cdot e_1= -\frac{1}{2} e_4,\quad e_4\cdot e_4=\frac{1}{2}e_2.
\end{array}
 \]
A Balinsky-Novikov structure, depending on parameters $\lambda, \gamma \in \mathbb R$, is given by
 \[
 \begin{array}{lllll}
e_1 \cdot e_1=  \gamma  e_2+\lambda e_3, & e_1 \cdot e_2
= e_2, & 
e_1 \cdot e_3= -e_2+e_3,& e_1\cdot e_4=\frac{1}{2} e_4,& e_4\cdot e_4=e_2.
 \end{array}
 \]
  \item \underline{The Lie superalgebra 
  $(A_{1,1}+3A)^1$:} A Novikov structure, depending on parameters $\lambda, \mu, \gamma \in \mathbb R$, is given by
    \[
\begin{array}{l}
e_2\cdot e_2=\frac{1}{2}e_1, \quad e_2\cdot e_3= \lambda e_1, \quad e_2\cdot e_4= \gamma e_1, \quad e_3\cdot e_2=-\lambda e_1, \quad e_3\cdot e_3=\frac{1}{2} e_1,\\[2mm]
e_3\cdot e_4= \mu e_1,\quad e_4\cdot e_2=-\gamma e_1, \quad e_4\cdot e_3=-\mu  e_1, \quad e_4\cdot e_4=\frac{1}{2}e_1.
\end{array}
 \]
A Balinsky-Novikov structure is given by:
 \[
 \begin{array}{llll}
e_2 \cdot e_2=   e_1, & e_3 \cdot e_3
= e_1, & 
e_4 \cdot e_4= e_1.
 \end{array}
 \]

 \item \underline{The Lie superalgebra 
  $(A_{1,1}+3A)^2$:} A Novikov structure, depending on parameters $\lambda, \mu, \gamma \in \mathbb R$, is given by
    \[
\begin{array}{l}
e_2\cdot e_2=\frac{1}{2}e_1, \quad e_2\cdot e_3= \lambda e_1, \quad e_2\cdot e_4= \gamma e_1, \quad e_3\cdot e_2=-\lambda e_1, \quad e_3\cdot e_3=\frac{1}{2} e_1,\\[2mm]
e_3\cdot e_4= \mu e_1,\quad e_4\cdot e_2=-\gamma e_1, \quad e_4\cdot e_3=-\mu  e_1, \quad e_4\cdot e_4=-\frac{1}{2}e_1.
\end{array}
 \]
A Balinsky-Novikov structure is given by
 \[
 \begin{array}{llll}
e_2 \cdot e_2=   e_1, & e_3 \cdot e_3
= e_1, & 
e_4 \cdot e_4= -e_1.
 \end{array}
 \]
 \end{enumerate}
 We conclude by saying that the lists of Lie superalgebras classified by Backhouse are all left-symmetric. Moreover, they are all Novikov and Balinsky-Novikov superalgebras, except the two $(D_0^{10})^1$  and $(D_0^{10})^2$ that are Balinsky-Novikov but not Novikov. 
\section{The list of four-dimensional real Lie superalgebras}
The tables below differ slightly from those in \cite{BM}. The symplectic forms given in \cite{BM} are given in terms of parameters. For convenience, we specify these  parameters here. In addition, we correct the statement regarding the Lie superalgebras $(D_0^{10})^1$ and $(D_0^{10})^2$. The two admit both an odd and an even non-degenerate closed form, contrary to what is stated in \cite{BM}.
\footnotesize
\begin{table}[H]
\centering
\begin{tabular}{| c | l | l | c| }\hline
 The LSA & Relations in the basis: $e_1, e_2 \, |\, e_3, e_4$ & Symplectic structure $\omega$  & $|\omega|$ \\ \hline
 $D^5$ & 
$
\begin{array}{lcllcl}
[e_1,e_3]&=&e_3,&
[e_1, e_4]&=&e_4, \\[1mm]
[e_2, e_4]&=&e_3
\end{array}
$

& $\begin{array}{l} e_3^*\wedge e_1^*+ e_4^*\wedge e_2^*
\end{array}$  & $\bar 1$  \\[2mm]  \hline
$D^6$ &  
$
\begin{array}{lcrlcl}
[e_1, e_3]&=&e_3, \;  [e_1, e_4]&=&e_4,\\[1mm]
[e_2, e_3]&=&-e_4, \; 
 [e_2, e_4]&=&e_3
\end{array}
$& $ \begin{array}{l}
e_3^*\wedge e_1^* + e_4^*\wedge e_2^*
\end{array}
$
& $\bar 1$  \\[2mm] \hline
$\begin{array}{c}
D^7_{pq},\\[1mm]
 pq\not =0,\\[1mm]
p\geq q
\end{array}
$ 
& 
$
\begin{array}{lcllcl}
[e_1,e_2]&=&e_2, &  
 [e_1, e_3]&=&p e_3,\\[1mm]
[e_1, e_4]&=&q e_4
\end{array}$ & \text{None} &--  \\[2mm] \hline
$
\begin{array}{c}
D^7_{-1q},\\[1mm]
q\leq -1 
\end{array} 
$
&
$
\begin{array}{lcllcl}
[e_1,e_2]&=&e_2,& 
[e_1, e_3]&=&- e_3,\\[1mm]
 [e_1, e_4]&=&q e_4
\end{array}$ & $ \begin{array}{l} e_4^*\wedge e_1^*+ e_3^*\wedge e_2^* \end{array} $ &
${\bar 1}$  \\[2mm] \hline
$\begin{array}{c}
D^7_{pp},\\[2mm]
p=-q\\[2mm]
q\not =0, -1
\end{array}
$
& 
$
\begin{array}{lcllcl}
[e_1, e_2] & = & e_2, &  
[e_1, e_3] & = & p e_3\\[1mm]
[e_1, e_4] & = & -p e_4
\end{array}
$
&
$\begin{array}{l}
e_2^*\wedge e_1^*-e_3^*\wedge e_4^*
\end{array}$
&
$\bar 0$
\\[2mm] \hline
$
\begin{array}{c}
D^7_{pq},\\[1mm]
p=-q , \\[1mm]
q =-1
\end{array}
$
&
$
\begin{array}{lcllcl}
[e_1,e_2]&=&e_2,&  
 [e_1, e_3]&=& e_3,\\[1mm]
 [e_1, e_4]&=&- e_4
\end{array}$ & $
\begin{array}{l}
e_2^*\wedge e_1^*- e_3^*\wedge e_4^*\\[1mm]
e_3^*\wedge e_1^*+e_4^*\wedge e_2^*
\end{array}$ & 
 
$\begin{array}{l}
\bar 0\\[1mm]
\bar 1
\end{array}$   \\[2mm] \hline
$
\begin{array}{c}
D^8_{p},\\
p\not=0 
\end{array} 
$
&
$
\begin{array}{lcllcl}
[e_1, e_2]&=&e_2,&  
[e_1, e_3]&=&p e_3,\\[1mm]
 [e_1, e_4]&=&e_3+ pe_4
\end{array}$ & None & --
\\[2mm] \hline
$D^8_{-1}$ & 
$
\begin{array}{lcllcl}
[e_1, e_2]&=&e_2,& 
 [e_1, e_3]&=&- e_3, \\[1mm]
[e_1, e_4]&=&e_3-e_4
\end{array}$ & $ \begin{array}{l}
e_3^*\wedge e_1^*+ e_4^*\wedge e_2^*
\end{array}$ & $\bar 1$\\[2mm] \hline
$
\begin{array}{c}
D^9_{pq}\\[1mm]
q>0
\end{array} 
$

& 
$
\begin{array}{lcllcl}
 [e_1, e_3]&=&pe_3-qe_4, &  
 [e_1, e_2]&=&e_2, \\[2mm]
 [e_1, e_4]&=&q e_3+pe_4
\end{array}$ & None  &--\\[2mm] \hline
  $D^{10}_{q}$ & $
\begin{array}{lcllcl}
[e_1, e_2]&=&e_2, &  
 [e_1, e_3]&=&(q+1)e_3,\\[1mm]
 [e_1, e_4]&=&q e_4, &
[e_2, e_4]&=&e_3
\end{array}$ & $
\begin{array}{l}
(1+q)  e_1^*\wedge e_3^*+ e_2^*\wedge e_4^*, \\[2mm]
\text{where } q\not=-1
\end{array}
$
& $\bar 1$ \\[2mm] \hline
\end{tabular}
\caption{Trivial algebras (i.e. $[\mathfrak{g}_{\bar 1}, \mathfrak{g}_{\bar 1}]=\{0\}$) with $\mathrm{sdim}=2|2$} \label{tab1}
\end{table}

\footnotesize
\begin{table}[H]
\centering
\small
\begin{tabular}{| c | l | c |c|}\hline
 The LSA & Relations in the basis: $e_1, e_2 \, |\, e_3, e_4$ & Symplectic structure & $|\omega|$ \\ \hline
 $(D^7_{1/2\;1/2})^1$ & $
\begin{array}{lcrlcl}
[e_1, e_2]&=&e_2, & [e_1, e_3]&=&\frac{1}{2} e_3\\[1mm]
 [e_1, e_4]&=&\frac{1}{2}e_4, & [e_3,e_3]&=&e_2  \\[1mm]
[e_4, e_4]&=&e_2
\end{array}$ & $e_1^*\wedge e_2^* - \frac{1}{2} e_3^*\wedge e_3^*- \frac{1}{2} e_4^*\wedge e_4^*$ & $ \bar 0$\\ \hline
 $(D^7_{1/2\;1/2})^2$ & $
\begin{array}{lcrlcl}
[e_1, e_2]&=&e_2, & [e_1, e_3]&=&\frac{1}{2} e_3\\[1mm]
 [e_1, e_4]&=&\frac{1}{2}e_4, & [e_3,e_3]&=&e_2  \\[1mm]
[e_4, e_4]&=& - e_2
\end{array}$ &  $-e_1^*\wedge e_2^* + \frac{1}{2} e_3^*\wedge e_3^* -  \frac{1}{2} e_4^*\wedge e_4^*$ & $\bar 0$\\ \hline 
 $(D^7_{1/2\;1/2})^3$ & $
\begin{array}{lcrlcl}
[e_1, e_2]&=&e_2, & [e_1, e_3]&=&\frac{1}{2} e_3\\[1mm]
 [e_1, e_4]&=&\frac{1}{2}e_4, & [e_3,e_3]&=&e_2 
\end{array}$ & None &$-$\\ \hline 
 $\begin{array}{c}
(D^7_{1-p,p})\\[1mm]
p\leq \frac{1}{2}
\end{array}$ & $
\begin{array}{lcllcl}
[e_1, e_2]&=&e_2, &
 [e_1, e_3]&=& p e_3\\[1mm]
 [e_1, e_4]&=&(1-p) e_4, & 
 [e_3,e_4]&=&e_2 
\end{array}$ & $e_1^*\wedge e_2^* -  e_3^*\wedge e_4^*$ & $\bar 0$\\ \hline 
 $(D^8_{1/2})$ & $
\begin{array}{lcllcl}
[e_1, e_2]&=&e_2, &[e_1, e_3]&=& \frac{1}{2} e_3\\[1mm]
 [e_1, e_4]&=&e_3+\frac{1}{2} e_4, & 
 [e_4,e_4]&=&e_2 
\end{array}$ & None & $-$\\ \hline 
 $
\begin{array}{c}
(D^9_{1/2,p})\\[1mm]
p>0
\end{array}$ & $
\begin{array}{lcllcl}
[e_1, e_2]&=&e_2,  & [e_3,e_3]&=&e_2 \\[1mm]
 [e_1, e_4]&=&p e_3+\frac{1}{2} e_4, & [e_4,e_4]&=&e_2 \\[1mm]
[e_1, e_3]&=& \frac{1}{2} e_3 -pe_4
\end{array}$ & $e_1^*\wedge e_2^* - \frac{1}{2}e_3^* \wedge e_3^* - \frac{1}{2}e_4^*\wedge e_4^*$ & $\ev$\\ \hline
 $(D^{10}_{0})^1$ & $
\begin{array}{lcllcl}
[e_1, e_2]&=&e_2,  & [e_1,e_3]&=&e_3 \\[1mm]
 [e_2, e_4]&=& e_3, & [e_4,e_4]&=&e_1 \\[1mm]
[e_3, e_4]&=&- \frac{1}{2} e_2
\end{array}$ & $\begin{array}{l}
2 e_2^*\wedge e_1^*-e_3^*\wedge e_4^* \\[1mm]
e_1^*\wedge e_3^*+e_2^*\wedge e_4^*
\end{array}$ & $\begin{array}{l} {\bar 0}\\ [1mm]{\bar 1}\end{array}$ \\ \hline
 $(D^{10}_{0})^2$ & $
\begin{array}{lcllcl}
[e_1, e_2]&=&e_2,  & [e_1,e_3]&=&e_3 \\[1mm]
 [e_2, e_4]&=& e_3, & [e_4,e_4]&=&-e_1 \\[1mm]
[e_3, e_4]&=& \frac{1}{2} e_2
\end{array}$ & $\begin{array}{l}
2 e_2^*\wedge e_1^*-e_3^*\wedge e_4^* \\[1mm]
e_1^*\wedge e_3^*+e_2^*\wedge e_4^*
\end{array}$ & $\begin{array}{l} {\bar 0}\\ [1mm]{\bar 1}\end{array}$ \\ \hline
 $(2A_{1,1}+2A)^1$ & $
\begin{array}{lcllcl}
[e_3, e_3]&=&e_1,  & [e_4,e_4]&=&e_2 
\end{array}$ & None & $-$\\ \hline
 $(2A_{1,1}+2A)^2$ &   $
\begin{array}{lcllcl}
[e_3, e_3]&=&e_1,  & [e_4,e_4]&=&e_2 \\[1mm]
[e_3, e_4]&=&e_1\\
\end{array}$ & None & $-$\\ \hline
$\begin{array}{c} (2A_{1,1}+2A)^3_p\\ p>0\end{array}$ & $
\begin{array}{lcllcl}
[e_3, e_3]&=&e_1,  & [e_4,e_4]&=&e_2 \\[1mm]
[e_3, e_4]&=&p(e_1+e_2)\\
\end{array}$  & $\begin{array}{l}
\text{For $p\not =\frac{1}{2}$: None}\\[1mm]
\text{For $p=\frac{1}{2}:\, e_2^*\wedge e_3^* - e_1^*\wedge e_4^*$}
\end{array}$ &
$\begin{array}{l}  
-\\
\bar 1
\end{array}
$\\ \hline
 $\begin{array}{c}(2A_{1,1}+2A)^4_p \\p>0\end{array}$ & $
\begin{array}{lcllcl}
[e_3, e_3]&=&e_1,  & [e_4,e_4]&=&e_2 \\[1mm]
[e_3, e_4]&=&p(e_1-e_2)\\
\end{array}$ & None &
$-$\\ \hline
 $(C^1_1+A)$ & $
\begin{array}{lcllcl}
[e_1, e_2]&=&e_2,  & [e_1,e_3]&=&e_3 \\[1mm]
[e_3, e_4]&=&e_2\\
\end{array}$ & $\frac{1}{2} e_2^*\wedge e_1^* +\frac{1}{2} e_3^*\wedge e_4^* +\frac{1}{4} e_4^*\wedge e_4^*$ & $\bar 0$\\ \hline
 $(C^1_{1/2}+A)$ & $
\begin{array}{lcllcl}
[e_1, e_2]&=&e_2,  & [e_1,e_3]&=&\frac{1}{2}e_3 \\[1mm]
[e_3, e_3]&=&e_2\\
\end{array}$ & $\begin{array}{l}
\lambda e_1^*\wedge e_2^* - \frac{\lambda}{2} e_3^*\wedge e_3^* - \frac{\mu}{2}e_4^*\wedge e_4^*, \\
\text{where }\lambda\mu\not =0
\end{array}$ & $\bar 0$\\ \hline
 $(C^2_{-1}+A)$ & $
\begin{array}{lcllcl}
[e_1, e_3]&=&e_3,  & [e_1,e_4]&=& - e_4 \\[1mm]
[e_3, e_4]&=&e_2\\
\end{array}$ & None & $-$\\ \hline
 $(C^3+A)$ & $
\begin{array}{lcllcl}
[e_1, e_4]&=&e_3,  & [e_4,e_4]&=& e_2 
\end{array}$ & $2 e_1^*\wedge e_2^* - e_3^*\wedge e_4^*$ & $\bar 0$\\ \hline
 $(C^5_0+A)$ & $
\begin{array}{lcllcl}
[e_1, e_3]&=& - e_4,  & [e_1,e_4]&=& e_3 \\[1mm]
[e_3, e_3]&=&e_2, & [e_4, e_4]&=&e_2\\
\end{array}$ & None & $-$\\
\hline
\end{tabular}
\caption{Non-trivial algebras (i.e. $[\mathfrak{g}_{\bar 1}, \mathfrak{g}_{\bar 1}]\not=\{0\}$) with $\mathrm{sdim}=2|2$}\label{tab2}
\end{table}

\scriptsize
\begin{table}[H]
\centering
\begin{tabular}{| c | l | c |c|}\hline
 The LSA & Relations in the basis: $e_1, e_2, e_3 \, |\, e_4$ & Symplectic structure & $|\omega|$ \\ \hline
 $D^1$ & $
\begin{array}{lcrlcl}
[e_2, e_3]&=&e_1, & [e_2, e_4]&=&e_4
\end{array}$ & $\begin{array}{l}
\lambda e_1^*\wedge e_3^*+ \mu e_2\wedge e_4^*\\[1mm]
+\nu e_1^*\wedge e_2^*+ \gamma e_2\wedge e_3^*\\[1mm]
\text{where } \lambda \mu \not =0
\end{array}$ & NH\\ \hline
 $\begin{array}{l}
D^2_q\\[1mm]
q \not =-1,0
\end{array}$ & $
\begin{array}{lcrlcl}
[e_1, e_3]&=& e_1, & [e_2, e_3]&=& e_1+ e_2,\\[1mm]
 [e_3, e_4]&=& q e_4
\end{array}$ & None & $-$\\ \hline
 $\begin{array}{l}
D^2_{-1}
\end{array}$ & $
\begin{array}{lcrlcl}
[e_1, e_2]&=& e_1, & [e_2, e_3]&=& e_1+ e_2,\\[1mm]
 [e_3, e_4]&=& - e_4
\end{array}$ & $
\begin{array}{l}
\lambda e_1^*\wedge e_3^*+ \mu e_2^*\wedge e_3^*+\\[2mm]
 \nu e_2^*\wedge e_4^*+ \gamma e_3^*\wedge e_4^*\\[2mm]
\text{where } \lambda \nu \not =0
\end{array}$ & NH\\ \hline
 $\begin{array}{l}
D^3_{pq}\\[1mm]
pq\not =0
\end{array}$ & $
\begin{array}{lcllcl}
[e_1, e_3]&=& pe_1-e_2, & [e_2, e_3]&=& e_1+ pe_2,\\[1mm]
 [e_3, e_4]&=&q e_4
\end{array}$ & None & $-$\\ \hline
\end{tabular}
\caption{Trivial algebras (i.e. $[\mathfrak{g}_{\bar 1}, \mathfrak{g}_{\bar 1}]=\{0\}$) with $\mathrm{sdim}=3|1$}\label{tab3}
\end{table}


\hspace{-9cm}

\scriptsize
\begin{table}[H]
\centering
\begin{tabular}{| c | l | c |c|}\hline
 The LSA & Relations  in the basis: $e_1\,|\,e_2,e_3,e_4$ & Symplectic structure & $|\omega|$ \\ \hline
 $\begin{array}{c}
D^{11}_{pq}\\
0< |p|\leq |q|\leq 1
\end{array}$ & $
\begin{array}{lcrlcl}
[e_1, e_2]&=&e_2, & [e_1, e_3]&=&p e_3,\\[1mm]
 [e_1, e_4]&=&q e_4
\end{array}$ & None& $-$\\ \hline
$\begin{array}{c}
D^{11}_{pq}\\
(p,q)=(-1,-1)
\end{array}$ & $
\begin{array}{lcrlcl}
[e_1, e_2]&=&e_2, & [e_1, e_3]&=&p e_3,\\[1mm]
 [e_1, e_4]&=&q e_4
\end{array}$ & $
\begin{array}{l}\lambda e_1^*\wedge e_2^*+ \mu e_1^*\wedge e_3^*+\nu e_1^*\wedge e_4^*\\[1mm]
+\gamma e_2^*\wedge e_3^*+\delta e_2^*\wedge e_4^*,\\[1mm]
\text{where } \nu \gamma-\mu \delta\not =0
\end{array}$ & NH\\ \hline
$\begin{array}{c}
D^{11}_{pq}\\
0< |p|\leq 1\\
p=-q
\end{array}$ & $
\begin{array}{lcrlcl}
[e_1, e_2]&=&e_2, & [e_1, e_3]&=&p e_3,\\[1mm]
 [e_1, e_4]&=&q e_4
\end{array}$ & $
\begin{array}{l}\lambda e_1^*\wedge e_2^*+ \mu e_1^*\wedge e_3^*+\nu e_1^*\wedge e_4^*\\[1mm]
+\gamma e_3^*\wedge e_4^*,\text{ where } \lambda \gamma\not =0
\end{array}$ & NH\\ \hline
$\begin{array}{c}
D^{11}_{pq}\\
0< |p|< 1\\
q=-1
\end{array}$ & $
\begin{array}{lcrlcl}
[e_1, e_2]&=&e_2, & [e_1, e_3]&=&p e_3,\\[1mm]
 [e_1, e_4]&=&q e_4
\end{array}$ & $
\begin{array}{l}\lambda e_1^*\wedge e_2^*+ \mu e_1^*\wedge e_3^*+\nu e_1^*\wedge e_4^*\\[1mm]
+\gamma e_2^*\wedge e_4^*, \text{ where } \mu \gamma\not =0
\end{array}$
 & NH\\ \hline
 $D^{12}$ & $
\begin{array}{lcrlcl}
[e_1, e_2]&=&e_2, & [e_1, e_4]&=&e_3,
\end{array}$ & None & $-$\\ \hline
 $\begin{array}{c}
D^{13}_p\\[1mm]
p\not =0 \text{ (generic)}
\end{array}$ & $
\begin{array}{lcllcl}
[e_1, e_2]&=&p e_2, & [e_1, e_3]&=&e_3,\\[1mm]
 [e_1, e_4]&=&e_3+e_4
\end{array}$ & None & $-$\\ \hline
$\begin{array}{c}
D^{13}_{-1}
\end{array}$ & $
\begin{array}{lcllcl}
[e_1, e_2]&=& - e_2, & [e_1, e_3]&=&e_3,\\[1mm]
 [e_1, e_4]&=&e_3+e_4
\end{array}$ & $
\begin{array}{l}
\lambda e_1^*\wedge e_2^*+ \mu e_1^*\wedge e_3^*+ \gamma e_1^*\wedge e_4^*\\[1mm]
+\nu e_2^*\wedge e_4^*, \text{ where } \mu \nu\not =0
\end{array}$ & NH\\ \hline
$\begin{array}{c}
D^{14}_{pq}\\[1mm]
p\not =0, q\geq 0
\end{array}$ & $
\begin{array}{lcllcl}
[e_1, e_2]&=&p e_2, \\[1mm]
 [e_1, e_3]&=&q e_3-e_4,\\[1mm]
 [e_1, e_4]&=&q e_4+e_3
\end{array}$ & None & $-$\\ \hline
$D^{15}$ & $
\begin{array}{lcrlcl}
[e_1, e_3]&=&e_2 ,& [e_1, e_4]&=&e_3,
\end{array}$ & 
$\begin{array}{l}
\lambda e_1^*\wedge e_2^*+\mu e_1^*\wedge e_3^*+ \nu e_1^*\wedge e_4^* \\[1mm]
+ \delta e_2^*\wedge e_4^* -\frac{1}{2} \delta e_3^*\wedge e_3^*,\\[1mm]
\text{where } \delta^2(\mu^2-2 \lambda  \nu)\not =0.
\end{array}$ & NH\\ \hline
$D^{16}$ & $
\begin{array}{lcl}
[e_1, e_2]&=&e_2 ,\\[1mm]
 [e_1, e_3]&=&e_2+e_3,\\[1mm]
 [e_1, e_4]&=&e_3+e_4
\end{array}$ & None & $-$\\ \hline
\end{tabular}
\caption{Trivial algebras (i.e. $[\mathfrak{g}_{\bar 1}, \mathfrak{g}_{\bar 1}]=\{0\}$) with $\mathrm{sdim}=1|3$}\label{tab4}
\end{table}

\footnotesize

\begin{table}[H]
\centering
\begin{tabular}{| c | l | c |c|}\hline
 The LSA & Relations in the basis: $e_1, e_2, e_3 \, |\, e_4$& Symplectic structure & $|\omega|$ \\ \hline
 $(A_{3,1}+A)$ & $
\begin{array}{lcrlcl}
[e_2, e_3]&=&e_1, & [e_4, e_4]&=&e_1
\end{array}$ & None & $-$\\ \hline
$\begin{array}{l}
(D^3_{p, -1/2})\\[1mm]
p\not =0
\end{array}$ & $
\begin{array}{lcrlcl}
[e_1, e_2]&=&e_2, & [e_1, e_3]&=&p e_3 , \\[1mm]
[e_1, e_4]&=&\frac{1}{2}e_4, & [e_4, e_4]&=& e_2\\[1mm]
\end{array}$ & None & $-$\\ \hline
$(D^3_{-1/2, -1/2})$ & $
\begin{array}{lcrlcl}
[e_1, e_2]&=&e_2, & [e_1, e_3]&=&- \frac{1}{2} e_3 , \\[1mm]
[e_1, e_4]&=&\frac{1}{2}e_4, & [e_4, e_4]&=& e_2\\[1mm]
\end{array}$ & $\begin{array}{l}
\frac{1}{2} \lambda e_1^*\wedge e_2^*+ \mu e_1^*\wedge e_3^*\\[1mm]
+ \nu e_1^*\wedge e_4^*+\gamma e_3^*\wedge e_4^*\\[1mm]
+\frac{1}{2}\lambda e_4^*\wedge e_4^*, \text{ where }\gamma \lambda\not =0
\end{array}$ & NH\\ \hline
 $(D^2_{-1/2})^1$ & $
\begin{array}{lcrlcl}
[e_1, e_2]&=&e_2, & [e_1, e_3]&=&e_2+e_3 , \\[1mm]
[e_1, e_4]&=&\frac{1}{2}e_4, & [e_4, e_4]&=&e_2
\end{array}$ & None & $-$\\ \hline
 $(D^2_{-1/2})^2$ & $
\begin{array}{lcrlcl}
[e_1, e_2]&=&e_2, & [e_1, e_3]&=&-e_2+e_3 , \\[1mm]
[e_1, e_4]&=&\frac{1}{2}e_4, & [e_4, e_4]&=&e_2
\end{array}$ & None & $-$\\ \hline
\end{tabular}
\caption{Trivial algebras (i.e. $[\mathfrak{g}_{\bar 1}, \mathfrak{g}_{\bar 1}]\not =\{0\}$) with $\mathrm{sdim}=3|1$}\label{tab5}
\end{table}

\footnotesize
\begin{table}[H]
\centering
\begin{tabular}{| c | l | c |c|}\hline
 The LSA & Relations in the basis: $e_1\,|\,e_2,e_3,e_4$ & Symplectic structure & $|\omega|$ \\ \hline
 $(A_{1,1}+3 A)^1$ & $
\begin{array}{lcrlcl}
[e_2, e_2]&=&e_1, & [e_3, e_3]&=&e_1,\\[1mm]
 [e_4, e_4]&=&e_1
\end{array}$ & None & $-$\\ \hline
$(A_{1,1}+3 A)^2$ & $
\begin{array}{lcrlcl}
[e_2, e_2]&=&e_1, & [e_3, e_3]&=&e_1,\\[1mm]
 [e_4, e_4]&=&-e_1
\end{array}$ & None & $-$\\ \hline
\end{tabular}
\caption{Trivial algebras (i.e. $[\mathfrak{g}_{\bar 1}, \mathfrak{h}_{\bar 1}]\not =\{0\}$) with $\mathrm{sdim}=1|3$}\label{tab6}
\end{table}

\normalsize 

\section{Appendix: The decomposable case}
We are only considering Lie superalgebras that are {\bf not} Lie algebras. We investigate symplectic structures on the decomposable 4-dimensional real Lie superalgebras below. The notation we use is that of Backhouse \cite{B}. Over $A\oplus B$, the notation $\omega:=\omega_A \oplus \omega_B$ means that
$\omega(a_1+b_1,a_2+b_2)=\omega_A(a_1,a_2)+\omega_B(b_1,b_2)$ for all $a_1,a_2\in A$ and $b_1,b_2\in B$.

\subsection{Abelian LSA}

\begin{enumerate}
\item NH: $\mathbb{R}^{1|3},$ $\mathbb{R}^{3|1},$
\item Symplectic: $(\mathbb{R}^{2|2}, \omega_{\mathbb{R}^{1|1}} \oplus \omega_{\mathbb{R}^{1|1}})$,  $(\mathbb{R}^{2|2}, \omega_{\mathbb{R}^{2|0}} \oplus \omega_{\mathbb{R}^{0|2}})$, $(\mathbb{R}^{0|4},\omega_{\mathbb{R}^{0|2}} \oplus \omega_{\mathbb{R}^{0|2}}).$

\end{enumerate}

\subsection{LSA of the form \texorpdfstring{$L\oplus M$}{L+M}, where \texorpdfstring{$L, M$}{L,M} are 2-dimensional} We refer to Table \ref{tab7} for the description of the LSA below:

\begin{enumerate}
\item NH: $B\oplus \mathbb{R}^{2|0},$ $B\oplus \mathbb{R}^{0|2},$ $B\oplus \mathrm{Al},$ $ \mathrm{Al}\oplus \mathbb{R}^{1|1},$ 
\item None: $(A_{1,2}+A)\oplus \mathbb{R}^{2|0}$, $(A_{1,2}+A)\oplus \mathbb{R}^{0|2},$ $(A_{1,2}+A)\oplus \mathbb{R}^{1|1}$, $(A_{1,2}+A)\oplus \mathrm{Al}$, $(A_{1,2}+A)\oplus B$,  $(A_{1,2}+A)\oplus (A_{1,2}+A)$,
\item Symplectic: $(B\oplus B, \omega_B\oplus \omega_B)$, $(B\oplus \mathbb{R}^{1|1}, \omega_B\oplus \omega_{\mathbb{R}^{1|1}})$, $(\mathrm{Al}\oplus \mathbb{R}^{0|2}, \omega_{\mathrm{Al}}\oplus \omega_{\mathbb{R}^{0|2}})$.
\end{enumerate}

\footnotesize
\begin{table}[H]
\centering
\begin{tabular}{| c | l | c |c|}
\hline
 The LA & Relations in the basis: $e_1,e_2$ & Symplectic structure & $|\omega|$ \\ \hline
 $\mathrm{Al}$ & $
\begin{array}{lcr}
[e_1, e_2]&=&e_1 
\end{array}$ & $\omega_{\mathrm{Al}}:=e_1^*\wedge e_2^*$ & $ \overline{0}$\\ \hline
 $\mathbb{R}^{2|0}$ &  & $\omega_{\mathbb{R}^{2|0}}:=e_1^*\wedge e_2^*$ & $ \overline{0}$\\ \hline
 The LSA & Relations in the basis: $e_1\,|\,e_2$ & Symplectic structure & $|\omega|$ \\ \hline
 $B$ & $
\begin{array}{lcr}
[e_1, e_2]&=&e_2 
\end{array}$ & $\omega_B:=e_1^*\wedge e_2^*$ & $ \overline{1}$\\ \hline
$(A_{1,2}+A)$ & $
\begin{array}{lcr}
[e_2, e_2]&=&e_1
\end{array}$ & None & $-$\\ \hline
$\mathbb{R}^{1|1}$ &  & $\omega_{\mathbb{R}^{1|1}}:=e_1^*\wedge e_2^*$ & $ \overline{1}$\\ \hline
The LSA & Relations in the basis: $|\,e_1, e_2$ & Symplectic structure & $|\omega|$ \\ \hline
$\mathbb{R}^{0|2}$ &  & $\omega_{\mathbb{R}^{0|2}}:=e_1^*\wedge e_2^*$ & $ \overline{0}$\\ \hline
\end{tabular}
\caption{Quasi-Frobenuis structure}\label{tab7}
\end{table}

\normalsize 

\subsection{LSA of the form \texorpdfstring{$L\oplus \mathbb{R}^{0|1}$}{L+R} or \texorpdfstring{$L\oplus \mathbb{R}^{1|0}$,}{L+R}  where \texorpdfstring{$L$}{L} is 3-dimensional}
Below, $X$ represents the generator of either $\mathbb{R}^{1|0}$ or $\mathbb{R}^{0|1}$. The parity of $X$ should be understood from the context.

\footnotesize
\begin{table}[H]
\centering
\begin{tabular}{| c | l | c |c|}\hline
 The LSA & Relations in the basis: $e_1,e_2\,|\,e_3$ and $X$ & Symplectic structure & $|\omega|$ \\ \hline
$C^1_p\oplus \mathbb{R}^{1|0}, p\not =0$ & $
\begin{array}{lcrlcr}
[e_1,e_2]&=&e_2, & [e_1,e_3]&=&pe_3
\end{array}$ & None & $-$\\ \hline
$C^1_p\oplus \mathbb{R}^{0|1}, p\not =0$ & $
\begin{array}{lcrlcr}
[e_1,e_2]&=&e_2, & [e_1,e_3]&=&pe_3
\end{array}$ & None & $-$\\ \hline
$C^1_{-1}\oplus \mathbb{R}^{1|0}$ & $
\begin{array}{lcrlcr}
[e_1,e_2]&=&e_2, & [e_1,e_3]&=&-e_3
\end{array}$ & NH & $-$\\ \hline
$C^1_{-1}\oplus \mathbb{R}^{0|1}$ & $
\begin{array}{lcrlcr}
[e_1,e_2]&=&e_2, & [e_1,e_3]&=&-e_3
\end{array}$ & $e_1^*\wedge X^*+e_2^*\wedge e_3^*$ & $\bar 1$\\ \hline
$C^1_{1/2}\oplus \mathbb{R}^{1|0}$ & $
\begin{array}{lcrlcr}
[e_1,e_2]&=&e_2, & [e_1,e_3]&=& \frac{1}{2}e_3,\\[1mm]
[e_3,e_3]&=&e_2
\end{array}$ & None & $-$\\ \hline
$C^1_{1/2}\oplus \mathbb{R}^{0|1}$ & $
\begin{array}{lcrlcr}
[e_1,e_2]&=&e_2, & [e_1,e_3]&=& \frac{1}{2}e_3,\\[1mm]
[e_3,e_3]&=&e_2
\end{array}$ & $e_3^*\wedge e_3^*+X^*\wedge X^*$ & $\bar 0$\\ \hline
 The LSA & Relations in the basis: $e_1\,|\,e_3, e_4$ and $X$ & Symplectic structure & $|\omega|$ \\ \hline
$C^2_p\oplus \mathbb{R}^{1|0},\; 0<|p|\leq 1$ & $
\begin{array}{lcrlcr}
[e_1, e_3]&=&e_3, & [e_1, e_4]&=&pe_4
\end{array}$ & None & $-$\\ \hline
$C^2_p\oplus \mathbb{R}^{0|1}, \; 0<|p|\leq 1$ & $
\begin{array}{lcrlcr}
[e_1, e_3]&=&e_3, & [e_1, e_4]&=&pe_4
\end{array}$ & None & $-$\\ \hline
$C^2_{-1}\oplus \mathbb{R}^{1|0}$ & $
\begin{array}{lcrlcr}
[e_1, e_3]&=&e_3, & [e_1, e_4]&=&-e_4
\end{array}$ & $e_1^*\wedge e_4^*+X^*\wedge e_4^*$ & $\bar 1$\\ \hline
$C^2_{-1}\oplus \mathbb{R}^{0|1}$ & $
\begin{array}{lcrlcr}
[e_1, e_3]&=&e_3, & [e_1, e_4]&=&-e_4
\end{array}$ & NH & $-$\\ \hline
$C^3\oplus \mathbb{R}^{1|0}$ & $
\begin{array}{lcr}
[e_1,e_4]&=&e_3
\end{array}$ & $e_1^*\wedge e_3^*+X^*\wedge e_4^*$ & $\bar 1$\\ \hline
$C^3\oplus \mathbb{R}^{0|1}$ & $
\begin{array}{lcr}
[e_1,e_4]&=&e_3
\end{array}$ & NH & $-$\\ \hline
$C^4\oplus \mathbb{R}^{1|0}$ & $
\begin{array}{lcrlcr}
[e_1, e_3]&=&e_3,& [e_1, e_4]=e_3+e_4
\end{array}$ & None & $-$\\ \hline
$C^4\oplus \mathbb{R}^{0|1}$ & $
\begin{array}{lcrlcr}
[e_1, e_3]&=&e_3,& [e_1, e_4]=e_3+e_4
\end{array}$ & None & $-$\\ \hline
$C^5_p\oplus \mathbb{R}^{1|0}, \; p\geq 0$ & $
\begin{array}{lcrlcr}
[e_1, e_3]&=&p e_3-e_4, & [e_1, e_4]&=& e_3+p e_4
\end{array}$ & None & $-$\\ \hline
$C^5_p\oplus \mathbb{R}^{0|1}, \; p\geq 0$ & $
\begin{array}{lcrlcr}
[e_1, e_3]&=&p e_3-e_4, & [e_1, e_4]&=& e_3+p e_4
\end{array}$ & None & $-$\\ \hline 
$(A_{1,1}+2A)^1\oplus \mathbb{R}^{1|0}$ & $
\begin{array}{lcrlcr}
[e_3, e_3]&=&e_1, & [e_4, e_4]=e_1
\end{array}$ & None & $-$\\ \hline
$(A_{1,1}+2A)^1\oplus \mathbb{R}^{0|1}$ & $
\begin{array}{lcrlcr}
[e_3, e_3]&=&e_1, & [e_4, e_4]=e_1
\end{array}$ & None & $-$\\ \hline
$(A_{1,1}+2A)^2\oplus \mathbb{R}^{1|0}$ & $
\begin{array}{lcrlcr}
[e_3, e_3]&=&e_1, & [e_4, e_4]=-e_1
\end{array}$ & None & $-$\\ \hline
$(A_{1,1}+2A)^2\oplus \mathbb{R}^{0|1}$ & $
\begin{array}{lcrlcr}
[e_3, e_3]&=&e_1, & [e_4, e_4]=-e_1
\end{array}$ & None & $-$\\ \hline
\end{tabular}
\caption{Quasi-Frobenuis structure}\label{tab7last}
\end{table}

\normalsize 





\vspace{2mm}

{\bf Acknowledgment.} We would like to acknowledge Chengming Bai for pointing out that Novikov algebras admit two superizations. Also, we would like to thank Sa\"id Benayadi, Isabel Cunha, Quentin Ehret, and the referee for reading the manuscript and making pertinent comments.\\

\noindent {\bf Declarations:}
\bigskip

\noindent {\bf Funding Statement}:  SB's research was supported by GRANT: AD-065. \\
{\bf Competing Interest Statement}: the authors declare no competing interest. \\
{\bf Authors' Contribution Statement}: the authors contributed equally to this work.



\end{document}